\let\cal=\mathcal
\def\N{{\mathbb N}}
\def\R{{\mathbb R}}
\def\P{{\mathbb P}}
\def\E{{\mathbb E}}
\def\T{{\mathbb T}}
\def\eps{\epsilon}
\newtheorem{thm}{Theorem}[section]
\newtheorem{cor}[thm]{Corollary}
\newtheorem{lem}[thm]{Lemma}
\newtheorem{prop}[thm]{Proposition}
\theoremstyle{definition}
\theoremstyle{remark}
\newtheorem{rem}[thm]{Remark}
\numberwithin{equation}{section}
\newcommand{\rmd}{{\rm d}}
\begin{document}

\title[Wasserstein convergence rates in the invariance principle]{Wasserstein convergence rates in the invariance principle for sequential dynamical systems}

\author{Zhenxin Liu}
\address{Z. Liu: School of Mathematical Sciences, Dalian University of Technology, Dalian
116024, P. R. China}
\email{zxliu@dlut.edu.cn}

\author{Zhe Wang$^{\ast}$}
\address{Z. Wang: School of Mathematical Sciences, Dalian University of Technology, Dalian
116024, P. R. China}
 \email{zwangmath@hotmail.com; wangz1126@mail.dlut.edu.cn}

\thanks{*Corresponding author}
\date{October 19, 2024}

\subjclass[2010]{37A50, 60F17, 37C99, 60B10}

\keywords{Invariance principle, rate of convergence, Wasserstein distance, sequential dynamical systems}

\begin{abstract}
In this paper, we consider the convergence rate with respect to the Wasserstein distance in the invariance principle for sequential dynamical systems. We utilize and modify the techniques previously employed for stationary sequences to address our non-stationary case. Under certain assumptions, we can apply our result to a class of dynamical systems, including sequential $\beta_n$-transformations, piecewise uniformly expanding maps with additive noise in one-dimensional and multidimensional case, and so on.
\end{abstract}

\maketitle

\section{Introduction}
\setcounter{equation}{0}

There is considerable interest in the study of statistical properties for deterministic dynamical systems exhibiting hyperbolicity, wherein the same map is iterated all along the time. Due to the presence of an absolutely continuous invariant measure, the observable processes along the orbit become stationary. However,
in many physical applications, it is often the case that different maps are iterated randomly. This situation can be described as a (discrete) time-dependent dynamical system. Over the past few decades, there has been a growing interest in proving statistical properties for time-dependent dynamical systems, including sequential dynamical systems and random dynamical systems. Unlike the time-independent systems, time-dependent systems lack a universal invariant measure across all maps. Due to the lack of invariant measure and the fact that maps change with time, the processes are non-stationary, which causes some difficulties in study.

Sequential dynamical systems, as introduced by Berend and Bergelson \cite{BB84}, consist of a composition of different maps, represented by $T_k\circ T_{k-1}\circ\cdots\circ T_1$. The literature on statistical properties for such systems is already extensive. Conze and Raugi's seminal paper \cite{Conze07} explored the dynamical Borel-Cantelli lemma and the central limit theorem (CLT) for a sequence of one-dimensional piecewise expanding maps. Haydn et al \cite{Haydn17} further investigated the almost sure invariance principle (ASIP) for sequential dynamical systems and some other non-stationary systems, which implies the CLT, the law of the iterated logarithm and their functional forms. Hafouta \cite{H20} obtained the Berry-Esseen theorem for sequential dynamical systems. Additionally, the extreme value theory \cite[Section~3-4]{FFV17} and concentration inequality \cite{AR16} were also obtained for sequential dynamical systems.

Random dynamical systems, as a particular case of time-dependent systems, have also attracted a lot of attention over the past few decades. For example, Buzzi \cite{B99} obtained exponential decay of correlations for random piecewise expanding maps in one and higher dimensions. Aimino et al \cite{ANV15} established the annealed and quenched CLT for random expanding maps. Subsequently, Dragi\v{c}evi\'{c} et al \cite{DFGV18} proved a fiberwise ASIP for random piecewise expanding maps. Later, Dragi\v{c}evi\'{c} and Hafouta \cite{DH21} extended Gou\"ezel's spectral approach to obtain the vector-valued ASIP.

Notably, for the systems discussed in the references mentioned above, their transfer operators with respect to the Lebesgue measure are quasi-compact on a suitable Banach space. However, when considering the composition of Pomeau-Manneville-like maps, obtained by perturbing the slope at the indifferent fixed point 0, the transfer operators  are not quasi-compact. Noteworthy results in this situation include discussions on
the loss of memory \cite{AHNTV15,KL22}, the extreme value law \cite{FFV18}, the CLT \cite{NTV18}, the ASIP \cite{S19}, the large deviation \cite{Nicol21}, among others.
We point out that the results in \cite{AHNTV15,KL22,FFV18} are applicable to sequential dynamical systems and the results in \cite{NTV18,S19,Nicol21} are applicable to both sequential and random dynamical systems.

In the present paper, we focus on the rate of convergence with respect to the Wasserstein distance in the invariance principle for sequential dynamical systems, whose transfer operators are quasi-compact in the setting of \cite{Conze07,Haydn17}. The invariance principle (also known as the functional CLT) states that a stochastic process constructed by the sums of random variables with suitable scale converges weakly to a Brownian motion. Here, we employ the Wasserstein distance to measure the rate of weak convergence.  For $p\ge 1$, we denote by $\cal W_p(P,Q)$ the Wasserstein distance between the distributions $P$ and $Q$ on a Polish space $(\cal X,d)$ (see \cite[Definition~6.1]{V09}):
\[
\cal W_p(P,Q)= \inf \{ [\mathbb{E} {d(X,Y)}^p]^{1/p}; \hbox{law} (X)=P, \hbox{law} (Y)=Q \}.
\]
In comparison to the L\'{e}vy-Prokhorov distance, the Wasserstein distance is stronger and contains more information since it involves the metric of the underlying space. This distance finds important applications in the fields of optimal transport, geometry, partial differential equations etc; see e.g.\ Villani~\cite{V09} for details.

To the best of our knowledge, there are only few results in the literature regarding the convergence rate in the weak invariance principle (WIP) for dynamical systems. Early works on the convergence rates in the WIP for the deterministic dynamical systems go back to \cite{AM19, Liu23}. Antoniou and Melbourne \cite{AM19} established the convergence rate in the L\'evy-Prokhorov distance in the WIP for nonuniformly hyperbolic systems. Liu and Wang \cite{Liu23} obtained the Wasserstein convergence rate in
the same setting. For the non-stationary case, in the probability theory literature, Hafouta \cite{H23} obtained the convergence rate (in the L\'evy-Prokhorov distance) in the invariance principle for $\alpha$-mixing triangular arrays that is also applicable to some classes of sequential expanding systems like non-stationary subshifts. Dedecker et al \cite{DMR22} provided rates of convergence (in the
$\cal W_1$-distance and the Kolmogorov distance) in the CLT for martingale in the non-stationary setting.  Turning to the dynamical systems literature, Hella and Lepp\"anen \cite{HL20} obtained the convergence rate (in the $\cal W_1$-distance) in the CLT for time-dependent intermittent maps.

In sequential dynamical systems, the variance can grow at an arbitrarily slow rate. In most limit theorem results we reference, the variance grows linearly, or specific growth conditions are imposed on the variance. Recently, Dolgopyat and Hafouta \cite{DH24} established the Berry-Esseen theorem and the almost sure invariance principle with rates for sequential dynamical systems without assuming any growth conditions on the variance.

In this paper, without any assumptions on the growth of variance, we obtain the Wasserstein convergence rate $O(\Sigma_n^{-\frac{1}{2}+\delta})$ in the invariance principle for sequential dynamical systems, where $\Sigma_n^2$ denotes the variance and $\delta$ can be arbitrarily small. To derive the convergence rate, we employ techniques developed for stationary systems, particularly the martingale approximation method and the martingale Skorokhod embedding theorem. A key component are the moment estimates (Propositions~\ref{VV} and \ref{mom})
from \cite{DH24}, which allow us to remove the growth condition on the variance.
The convergence rate we obtain is close to the best one achieved in the i.i.d. case. Additionally, we apply our result to a class of dynamical systems, including sequential $\beta_n$-transformations, piecewise uniformly expanding maps with additive noise in one-dimensional and multidimensional case, and a general class of covering maps. We point out that the family of maps we consider consists of maps which are sufficiently close to a fixed map.

To establish the related convergence rate in the invariance principle for sequential dynamical systems, whose transfer operators are not quasi-compact,  a secondary martingale-coboundary decomposition \cite{KKM18}, similar to that in the stationary case, may be the key. However, the decomposition is currently unavailable and it is the ongoing focus of our research.

The remainder of this paper is organized as follows. In Section 2, we introduce the setting and main result of this paper. In Section 3, we recall the martingale decomposition for sequential dynamical systems and give results on moment estimates.
In Section 4, we prove the main result. In the last section, we give some applications to explain our result.

Throughout the paper, we use $1_A$ to denote the indicator function of measurable set $A$. As usual, $a_n=O(b_n)$ means that there exists a constant $C>0$ such that $|a_n|\le C |b_n|$ for all $n\ge 1$, and $\|\cdot\|_{p}$ means the $L^p$-norm. For simplicity we write $C$ to denote constants independent of $n$ and $C$ may change from line to line.  We use $\rightarrow_{w}$ to denote the weak convergence in the sense of probability measures \cite{Bill99}. We denote by $C[0,1]$ the space of all continuous functions on $[0,1]$ equipped with the supremum distance $d_C$, that is
\[
d_C(x,y):=\sup_{t\in [0,1]}|x(t)-y(t)|, \quad x,y\in C[0,1].
\]
We use $\P_X$ to denote the law/distribution of random variable $X$ and use $X=_d Y$ to mean $X, Y$ sharing the same distribution. We use the notation $\mathcal{W}_p(X,Y)$ to mean $\mathcal{W}_p(\P_X, \P_Y)$ for the sake of simplicity.
\section{Setting and main result}
In this section, we first recall an introduction to sequential dynamical systems and some basic assumptions, which were described in detail
in~\cite{Conze07,Haydn17}, and then we state our main result.
\subsection{Sequential dynamical systems}
Let $M$ be a compact subset of $\R^d$ or a torus $\T^d$ with the Lebesgue measure $m$. Consider a family $\mathcal F$ of non-invertible maps $T_\alpha:M\to M$, which are non-singular with respect to $m$ (i.e. $m(T_\alpha^{-1}E)=0$ if and only if $m(E)=0$ for all Borel measurable sets $E\subset M$).
We take a countable sequence of maps
$\{T_k\}_{k\ge 1}$ from $\mathcal F$; this sequence defines a sequential dynamical system.

We denote by $\{\cal{T}^n\}_{n\ge 0}$ the sequence of composed maps
\[
\cal{T}^n:=T_n\circ T_{n-1}\circ\cdots\circ T_1 \quad\text{for~} n\ge 1, \text{~and~} \cal T^0:=Id.
\]
The transfer operator $P_\alpha$ corresponding to $T_\alpha$ is defined by
\[
\int_{M}P_\alpha f\cdot g\rmd m=\int_{M} f\cdot g\circ T_\alpha\rmd m \quad\hbox{for~all~} f\in L^1(m), g\in L^\infty(m).
\]
Similar to $\cal{T}^n$, we can define the composition of operators as
\[
\cal{P}^n:=P_n\circ P_{n-1}\circ\cdots\circ P_1 \quad\text{for~} n\ge 1, \text{~and~} \cal P^0:=Id.
\]
Then it is easy to check that
\begin{align}\label{trans}
\int_{M} \cal{P}^n f\cdot g\rmd m=\int_{M} f\cdot g\circ \cal{T}^n\rmd m \quad\hbox{for~all~} f\in L^1(m), g\in L^\infty(m).
\end{align}

For a fixed sequence $\{\cal T^n\}_{n\ge 0}$, we set $\cal B_n:=(\cal T^n)^{-1}\cal B$, the $\sigma$-algebra associated with $n$-fold pull back of the Borel $\sigma$-algebra $\cal B$. Since the transformations $T_n$ are non-invertible,
we obtain a decreasing sequence of $\sigma$-algebras $\{\cal B_n\}_{n\ge 0}$, i.e. $\cal B_n\subset \cal B_m$ for $n\ge m\ge 0$. It was described
in~\cite{Conze07} that for $f\in L^{\infty}(m)$, the quotients $|\cal{P}^n f/\cal{P}^n 1|$ are bounded by $\|f\|_\infty$ on the set
$\{\cal{P}^n 1>0\}$ and we have $\cal{P}^n f(x)=0$ on $\{\cal{P}^n 1=0\}$. Then we can define $|\cal{P}^n f/\cal{P}^n 1|=0$ on $\{\cal{P}^n 1=0\}$.
Therefore, we have
\begin{align}
\E(f|\cal B_k)=\big(\frac{\cal{P}^k f}{\cal{P}^k 1}\big)\circ \cal T^k,
\end{align}
and,
\begin{align}\label{cexp}
\E(f\circ \cal T^l|\cal B_k)=\big(\frac{P_k\cdots P_{l+1}(f\cal{P}^l 1)}{\cal{P}^k 1}\big)\circ \cal T^k,\quad 0\le l\le k\le n.
\end{align}
Here, the expectation is taken with respect to the Lebesgue measure $m$.
\subsection{Assumptions}
Let $\cal V\subset L^1(m) (1\in \cal V)$ be a Banach space of  functions from $M$ to $\R$ with norm $\|\cdot\|_\alpha$, such that $\|v\|_\infty\le C\|v\|_\alpha$ for some constant $C>0$ independent of $v$.
For example, we can let $\cal V$ be the Banach space of bounded variation functions on a compact interval of $\R$ with the norm $\|\cdot\|_{BV}$ given by the sum of the $L^1$ norm and the total variation $|\cdot|_{bv}$, or we can take $\cal V$ to be the space of $\alpha$-H\"older functions on a compact set of $\R^d$ with the norm $\|\cdot\|_\alpha=\|\cdot\|_\infty+|\cdot|_\alpha$, where $|\cdot|_\alpha$
denotes the H\"older semi-norm.

Following the setting described in~\cite{Conze07} and \cite{Haydn17}, we now recall the required properties (DEC) and (MIN). Moreover, we add a property (SUP), which is implied in~\cite{Conze07}.

{\bf Property (DEC).} Given a family $\mathcal F$ of non-invertible non-singular maps defined on $M$, there exist constants $C>0$, $\gamma\in(0,1)$ such that for any $n\ge 1$, any sequence of transfer operators $P_1, P_2,\ldots, P_n$ corresponding to maps chosen from $\mathcal F$ and any $v\in \cal V$ with zero (Lebesgue) mean, we have
\[
\|P_n\circ P_{n-1}\circ\cdots\circ P_1v\|_\alpha\le C\gamma^n\|v\|_\alpha.
\]

{\bf Property (MIN).} There exists $\delta>0$ such that for any sequence $P_1, P_2,\ldots, P_n$  as defined above, we have the uniform lower bound
\[
\inf_{x\in M}P_n\circ P_{n-1}\circ\cdots\circ P_11(x)\ge \delta, \quad\forall n\ge 1.
\]

{\bf Property (SUP).} For any sequence $P_1, P_2,\ldots, P_n$ as defined in (DEC), we have
\[
\sup_n\|P_n\circ P_{n-1}\circ\cdots\circ P_11\|_\infty< \infty.
\]

\subsection{Main result}
Let $v_n:M\to\R$ be a family of functions in $\cal V$ such that $\sup_n\|v_n\|_\alpha<\infty$.
Denote $S_n\bar v:=\sum_{i=0}^{n-1}\bar v_i\circ \cal T^i$, $\Sigma_n^2:=\E(\sum_{i=0}^{n-1}\bar v_i\circ \cal T^i)^2$, where
$\bar v_i:=v_i-\int_M v_i\circ \cal T^i\rmd m$. For every $t\in[0,1]$, set
\[
N_n(t):=\min\{1\le k\le n: t\Sigma_n^2\le\Sigma_{k}^2\}.
\]
Consider the following continuous processes $W_{n}(t)\in C[0,1]$ defined by
\begin{equation}\label{wnt}
W_{n}(t):=\frac{1}{\Sigma_n}\bigg[\sum_{i=0}^{N_n(t)-1}\bar v_i\circ \cal T^i+\frac{t\Sigma_n^2-\Sigma_{N_n(t)-1}^2}{\Sigma_{N_n(t)}^2-\Sigma_{N_n(t)-1}^2}\bar v_{N_n(t)}\circ \cal T^{N_n(t)}\bigg],\quad t\in[0,1].
\end{equation}

When the sequence $\{\cal T^n\}$ satisfies (DEC) and (MIN), and the variance $\Sigma_n^2$ satisfies an additional growth rate condition, i.e. $\Sigma_n\ge n^{\frac{1}{4}+\delta}$ for some $0<\delta<\frac{1}{4}$, Haydn et al \cite{Haydn17} obtained that the almost sure invariance principle (ASIP) holds. Recently, Dolgopyat and Hafouta \cite{DH24} improved the result by removing the assumption on the growth of variance and, in a more general setting, obtained the ASIP.
Namely, for any $\delta>0$, there is, enlarging the probability space if necessary, a sequence of independent centered Gaussian variables $\{Z_k\}$ such that
\[
\sup_{1\le k\le n}\big|\sum_{i=1}^{k}\bar v_i\circ \cal T^i-\sum_{i=1}^{k}Z_i\big|=o(\Sigma_n^{1/2+\delta}) \quad m-a.s.
\]

We can deduce from the ASIP that the weak invariance principle holds, i.e. $W_n\to_w B$ in $C[0,1]$, where $B$ is a standard Brownian motion. Now, we introduce our main result on the Wasserstein convergence rate in the invariance principle.
\begin{thm}\label{thnon}
Assume that $\{\cal T^n\}$ satisfies (DEC), (MIN) and (SUP). Let $\{v_n\}$ be a sequence of functions in $\cal V$ with $\sup_n\|v_n\|_\alpha<\infty$. Then for any $\delta>0$, there exists a constant $C>0$ such that $\mathcal{W}_{p}(W_{n},B)\leq C \Sigma_n^{-\frac{1}{2}+\delta}$ for $n\ge 1$ and $p\ge 2$, where $B$ is a standard Brownian motion.
\end{thm}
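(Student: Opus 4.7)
The plan is to follow the classical martingale approximation plus Skorokhod embedding strategy, adapted to the non-stationary reverse-filtration setting of sequential systems and powered by the moment bounds (Propositions \ref{VV} and \ref{mom}) imported from Dolgopyat--Hafouta. First I would invoke the martingale--coboundary decomposition recalled in Section 3 to write
\[
\bar v_i \circ \mathcal{T}^i = D_i + g_i\circ \mathcal{T}^i - g_{i+1}\circ \mathcal{T}^{i+1},
\]
where $\{D_i\}$ is a reverse-martingale-difference sequence with respect to the decreasing filtration $\{\mathcal{B}_i\}$ and the $g_i$ are uniformly bounded (this uses (DEC), (MIN) and (SUP)). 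Two consequences are immediate: (a) the telescoping coboundary contributes $O(1)$ to $S_n\bar v$, hence $O(\Sigma_n^{-1})$ to $W_n$ in sup-norm, which is already better than the claimed rate; (b) after reversing the time direction, $\sum_i D_i$ becomes a genuine martingale with respect to an increasing filtration, to which the Skorokhod embedding applies.

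Second, I would apply the martingale Skorokhod embedding theorem to realize $\sum_{i=1}^{k} D_i$ as $B(\tau_k)$ for a standard Brownian motion $B$ and stopping times $0=\tau_0\le\tau_1\le\cdots$ satisfying $\mathbb{E}(\tau_k-\tau_{k-1}\mid\mathcal{F}_{k-1}) = \mathbb{E}(D_k^2\mid\mathcal{F}_{k-1})$ together with $L^p$-bounds on $\tau_k-\tau_{k-1}$ in terms of $\|D_k\|_p$. I would then split
\[
\mathcal{W}_p(W_n,B) \le \mathcal{W}_p(W_n,\widetilde W_n) + \mathcal{W}_p(\widetilde W_n, B),
\]
where $\widetilde W_n(t) := \Sigma_n^{-1} B(\tau_{N_n(t)})$ (plus the same linear interpolation correction as in \eqref{wnt}). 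The first term absorbs the coboundary together with the approximation of $\sum \bar v_i\circ\mathcal{T}^i$ by the embedded martingale, and is $O(\Sigma_n^{-1})$. The second term is governed, via the $L^p$-modulus of continuity of Brownian motion, by
\[
\Bigl\|\sup_{0\le t\le 1}\bigl|\Sigma_n^{-2}\tau_{N_n(t)} - t\bigr|\Bigr\|_p,
\]
which, by Proposition \ref{VV} applied to $\sum_{i\le k}(\mathbb{E}(D_i^2\mid\mathcal{F}_{i-1}) - \mathbb{E} D_i^2)$ and Proposition \ref{mom} applied to $\sum_{i\le k}(D_i^2-\mathbb{E}(D_i^2\mid\mathcal{F}_{i-1}))$, can be bounded by $O(\Sigma_n^{-1+2\delta})$; the Brownian H\"older $1/2-\delta$ estimate then converts this to $O(\Sigma_n^{-1/2+\delta})$ in the sup-metric, i.e.\ in the $\mathcal{W}_p$ sense.

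The role of the peculiar time change $N_n(t)$ should be stressed: it is designed precisely so that $\Sigma_{N_n(t)}^2 \approx t\Sigma_n^2$, aligning the clock of $W_n$ with the martingale's conditional quadratic variation rather than with the trivial discrete clock $k/n$. This matching is what lets the argument run without any lower bound on the growth of $\Sigma_n$: the ``target'' quantity $\Sigma_n^{-1/2+\delta}$ is intrinsic to the variance and cannot be rewritten in terms of $n$. I would verify carefully that the linear interpolation used in \eqref{wnt} is compatible, up to an $O(\Sigma_n^{-1})$ error, with the continuous-time interpolation obtained from $B(\tau_k)$.

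The main obstacle I expect is the quantitative control of the Skorokhod clock $\tau_k$ uniformly in $k\le n$ in the non-stationary regime. Pointwise estimates on $\Sigma_n^{-2}\tau_k-\Sigma_k^2/\Sigma_n^2$ follow fairly directly from Propositions \ref{VV} and \ref{mom}, but upgrading these to a uniform-in-$t$ estimate on $\sup_t |\Sigma_n^{-2}\tau_{N_n(t)}-t|$ requires a Doob/Burkholder-type maximal inequality on top of the moment bounds, together with a chaining/dyadic argument and careful tracking of the parameter $\delta$ so that it can indeed be made arbitrarily small. The interplay between the maximal inequality, the variance-based time change $N_n(t)$, and the reverse martingale structure is the place where the non-stationarity really bites.
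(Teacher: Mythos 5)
Your skeleton (martingale--coboundary decomposition, Skorokhod embedding, the Dolgopyat--Hafouta moment bounds, and the H\"older modulus of Brownian motion) is the same as the paper's, but there is a concrete gap in how you couple $W_n$ to the embedded Brownian motion, and it comes from the direction of time. The differences $\psi_i\circ\mathcal T^i$ form a \emph{reverse} martingale difference sequence for the decreasing filtration $\mathcal B_i$; the Skorokhod embedding (Theorem \ref{ske}) applies only after reversing time, so it couples $B$ to the \emph{reversed} partial sums $\sum_{j=1}^{k}\psi_{n-j}\circ\mathcal T^{n-j}$, i.e.\ $B(\tau_k)$ is (a copy of) the normalized sum of the \emph{last} $k$ summands. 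Consequently your process $\widetilde W_n(t):=\Sigma_n^{-1}B(\tau_{N_n(t)})$ tracks the sum of the last $N_n(t)$ terms, while $W_n(t)$ is built from the first $N_n(t)$ terms; these differ by a quantity of order $\Sigma_n$, not $O(1)$, so the claimed bound $\mathcal W_p(W_n,\widetilde W_n)=O(\Sigma_n^{-1})$ fails as stated (and you cannot instead embed the forward sums directly, since they are not a martingale for an increasing filtration). The paper closes exactly this hole with the reversal map $g(u)(t)=u(1)-u(1-t)$: $g$ is $2$-Lipschitz on $(C[0,1],d_C)$ and $g(B)=_dB$, so $\mathcal W_p(M_n,B)\le 2\mathcal W_p(g\circ M_n,g\circ B)$, and one compares $g\circ M_n$ (which is aligned with the reversed sums, hence with the embedding) to the quadratic-variation-clocked process $X_n$. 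This in turn uses the exact identity $l_n(t)=n-r_n(1-t)$, which holds for the clock built from $\sigma_k^2=\sum_{i<k}\E(\psi_i^2\circ\mathcal T^i)$ but not for your clock $N_n(t)$ built from $\Sigma_k^2$; that is why the paper first replaces $W_n$ by $M_n$ (Lemma \ref{wmn}) via a Hafouta-style blocking argument, and that clock change already costs $\Sigma_n^{-1/2+\delta}$, not $O(\Sigma_n^{-1})$ --- so your accounting, which charges the entire clock mismatch to the Brownian modulus of continuity at price $O(\Sigma_n^{-1})$, hides a step that must be justified (using Corollary \ref{var} and Proposition \ref{mom} to control the partial-sum fluctuation over the index window where $\Sigma_k^2$ and $\sigma_k^2$ disagree, which is of size $O(\Sigma_n)$ in variance, hence $O(\Sigma_n^{1/2+\delta})$ in sup-norm).

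On the other hand, the quantitative core of your second term is consistent with the paper: the estimates $\|\max_k|T_k-V_{n,k}|\|_p\le C\sigma_n^{-1}$, $\|V_{n,n}-1\|_p\le C\sigma_n^{-1}$ and $\max_k|\sigma_n^2V_{n,k}-\alpha_k^2|=O(\sigma_n)$ in $L^p$ follow from the arguments you indicate (the proofs of Propositions \ref{VV}, \ref{mom} and \ref{vn1}), and converting a clock error of size $O(\sigma_n^{-1})$ into a process error of size $O(\sigma_n^{-1/2+\delta})$ via Kolmogorov--H\"older continuity is exactly the paper's Steps 2--3. Also, the uniform-in-$t$ control you worry about does not need chaining: the paper gets it from Doob/Burkholder (Theorem \ref{bhi}) together with the crude maximal bound of Proposition \ref{mpe} applied with a large exponent $\kappa p$, which is where the arbitrarily small $\delta$ enters. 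The genuinely non-stationary/reverse-filtration difficulty is not the clock maximal inequality but the reversal bookkeeping and the $\Sigma_k^2$-versus-$\sigma_k^2$ clock change that your outline elides.
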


\begin{rem}
Our result implies a convergence rate $\pi(W_n, B)=O(\Sigma_n^{-\frac{1}{2}+\delta})$ with respect to the L\'{e}vy-Prokhorov distance. Indeed, for any two given probability measures $\mu$ and $\nu$, we have $\pi(\mu,\nu)\le \mathcal{W}_{p}(\mu,\nu)^{\frac{p}{p+1}}$ for
$p\ge 1$.
\end{rem}

\begin{rem}
Our result can be applied to random dynamical systems in the setting of \cite{DFGV18, DH21}. In random dynamical systems, the variance typically grows linearly. Nevertheless, we still should consider the self-normalized Birkhoff sums. Namely, the continuous process under consideration should be defined in the same way as in the sequential case. To our understanding, in the non-self-normalized case, it is a tricky problem to get the convergence rate of quenched variance to the annealed variance, because we know nothing about the regularity of the observable of the base map.
\end{rem}

\begin{rem}
Note that our method does not work for the estimate of $\mathcal{W}_1(W_n, B)$. But we know that $\mathcal{W}_{q}(W_{n},B)$ $\le \mathcal{W}_{p}(W_{n},B)$ for $q\le p$, so $\mathcal{W}_{1}(W_{n},B)$ can be controlled by $\mathcal{W}_{q}(W_{n},B)$ for $q>1$. It seems an interesting question to estimate the convergence rate for $\mathcal{W}_1(W_n, B)$ directly, which probably produces a better rate.
\end{rem}

\section{Moment estimates}
In the following, we assume that $\{\cal T^n\}$ satisfies the conditions (DEC), (MIN) and (SUP).
As in \cite{Conze07}, we define the operator $Q_n$ by $Q_nv:=\frac{P_n(v\cal P^{n-1}1)}{\cal P^{n}1}$. Set $h_0:=0$ and for $n\ge 1$,
\begin{align*}
h_n&:=Q_n\bar v_{n-1}+Q_n\circ Q_{n-1}\bar v_{n-2}+\cdots+Q_n\circ Q_{n-1}\circ \cdots \circ Q_1\bar v_{0}\\
&=\frac{1}{\cal P^{n}1}\big[P_n(\bar v_{n-1}\cal P^{n-1}1)+P_n\circ P_{n-1}(\bar v_{n-2}\cal P^{n-2}1)+\cdots+P_n\circ P_{n-1}\circ \cdots \circ P_{1}(\bar v_{0}\cal P^{0}1)\big].
\end{align*}
Since $\{\bar v_{n-k}\cal P^{n-k}1\}_{1\le k\le n}$ belongs to $\cal V$, by the properties (DEC) and (MIN), $\|h_n\|_\alpha$ is uniformly bounded. In particular, $h_n\in L^{\infty}(m)$.

Define $\psi_n=\bar v_{n}+h_n-h_{n+1}\circ T_{n+1}$. Then $\|\psi_n\|_\infty\le \|\bar v_{n}\|_\infty+2\|h_n\|_\infty<\infty$.
 It follows from~\cite{Conze07} that $\{\psi_{n}\circ \mathcal T^n\}_{n\ge0}$ is a sequence of reverse martingale differences for the filtration $\{\cal B_n\}_{n\ge 0}$, and we have
\[
\sum_{i=0}^{n-1}\bar v_{i}\circ \mathcal T^i=\sum_{i=0}^{n-1}\psi_{i}\circ \mathcal T^i+h_{n}\circ \mathcal T^{n}.
\]

\begin{prop}\label{SD}
$\Sigma_{n}=\sigma_{n}+O(1)$, where $\sigma_n^2=\mathbb \sum_{i=0}^{n-1}\E(\psi_i^2\circ \mathcal T^i)$.
\end{prop}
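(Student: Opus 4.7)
The plan is to exploit the martingale-coboundary decomposition
$$S_n\bar v = M_n + h_n\circ\cal T^n, \qquad M_n:=\sum_{i=0}^{n-1}\psi_i\circ\cal T^i,$$
and compute $\Sigma_n^2=\E[(M_n+h_n\circ\cal T^n)^2]$ directly. Expanding the square yields three terms: $\E(M_n^2)$, the cross term $2\E(M_n\cdot h_n\circ\cal T^n)$, and $\E((h_n\circ\cal T^n)^2)$. First I would observe that because $\{\psi_i\circ\cal T^i\}$ are reverse martingale differences for the (decreasing) filtration $\{\cal B_i\}$, they are pairwise orthogonal in $L^2(m)$, so $\E(M_n^2)=\sum_{i=0}^{n-1}\E(\psi_i^2\circ\cal T^i)=\sigma_n^2$.

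Next I would show that the cross term vanishes. For each $0\le i\le n-1$, the random variable $h_n\circ\cal T^n$ is $\cal B_n$-measurable, and since the filtration is \emph{decreasing}, $\cal B_n\subset\cal B_{i+1}$. Therefore
$$\E\bigl[\psi_i\circ\cal T^i\cdot h_n\circ\cal T^n\bigr]=\E\bigl[h_n\circ\cal T^n\cdot \E(\psi_i\circ\cal T^i\mid\cal B_{i+1})\bigr]=0,$$
by the defining property $\E(\psi_i\circ\cal T^i\mid\cal B_{i+1})=0$ of the reverse martingale increments. Summing in $i$ gives $\E(M_n\cdot h_n\circ\cal T^n)=0$. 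For the last term, the excerpt already records that $\|h_n\|_\alpha$, and hence $\|h_n\|_\infty$, is uniformly bounded in $n$; consequently $\E((h_n\circ\cal T^n)^2)\le\|h_n\|_\infty^2\le C$.

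Combining these three observations, $\Sigma_n^2=\sigma_n^2+\E((h_n\circ\cal T^n)^2)$, so in particular $\Sigma_n\ge\sigma_n\ge 0$ and $0\le \Sigma_n^2-\sigma_n^2\le C$. To convert this into the stated estimate on the difference (not the squares), I would split into cases: if $\Sigma_n\ge 1$, then
$$\Sigma_n-\sigma_n=\frac{\Sigma_n^2-\sigma_n^2}{\Sigma_n+\sigma_n}\le\frac{C}{\Sigma_n}\le C;$$
if $\Sigma_n<1$, then $0\le\sigma_n\le\Sigma_n<1$ gives $\Sigma_n-\sigma_n<1$ trivially. In either case $|\Sigma_n-\sigma_n|\le C$, which is the conclusion.

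There is no real obstacle here; the only subtlety worth being careful about is the direction of the filtration. One must remember that the $\cal B_k$ are pull-backs under $\cal T^k$, so $\cal B_n\subset\cal B_{i+1}$ when $n\ge i+1$, which is exactly what makes $h_n\circ\cal T^n$ fit against the reverse martingale structure and kills the cross term. Once that is handled, the uniform bound on $\|h_n\|_\infty$ supplied by properties (DEC) and (MIN) immediately closes the argument.
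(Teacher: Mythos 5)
Your proof is correct, and it takes a somewhat different route than the paper, though both rest on the same decomposition $S_n\bar v=S_n\psi+h_n\circ\mathcal T^n$ and the uniform bound on $\|h_n\|_\infty$. The paper's argument is a two-line application of the reverse triangle inequality in $L^2$: $|\Sigma_n-\sigma_n|=\bigl|\|S_n\bar v\|_2-\|S_n\psi\|_2\bigr|\le\|h_n\circ\mathcal T^n\|_2$, which is uniformly bounded (the identification $\|S_n\psi\|_2^2=\sum_i\E(\psi_i^2\circ\mathcal T^i)$ via orthogonality of the reverse martingale differences is used implicitly there, just as you use it explicitly for $\E(M_n^2)$). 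You instead expand $\Sigma_n^2$ and kill the cross term by conditioning on $\mathcal B_{i+1}$, using that $h_n\circ\mathcal T^n$ is $\mathcal B_n$-measurable and $\mathcal B_n\subset\mathcal B_{i+1}$; this is valid (the defining property $\E(\psi_i\circ\mathcal T^i\mid\mathcal B_{i+1})=0$ holds here since $P_{i+1}(\psi_i\,\mathcal P^i 1)=0$), and your case analysis to pass from squares to the difference is fine. What your longer route buys is the exact identity $\Sigma_n^2=\sigma_n^2+\|h_n\circ\mathcal T^n\|_2^2$, hence $\Sigma_n\ge\sigma_n$ and $\Sigma_n^2-\sigma_n^2=O(1)$ --- strictly sharper than the paper's later Corollary~\ref{var}, whose proof keeps the cross term and only gets $O(\sigma_n)$ via Cauchy--Schwarz; what the paper's route buys is brevity, needing nothing beyond the triangle inequality.
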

\begin{proof}
Since
\begin{align*}
&|\Sigma_{n}-\sigma_{n}|=\Big|\big\|\sum_{i=0}^{n-1}\bar v_{i}\circ \mathcal T^i\big\|_2-\big\|\sum_{i=0}^{n-1}\psi_{i}\circ \mathcal T^i\big\|_2\Big|\\
&\le \Big\|\sum_{i=0}^{n-1}\bar v_{i}\circ \mathcal T^i-\sum_{i=0}^{n-1}\psi_{i}\circ \mathcal T^i\Big\|_2=\|h_{n}\circ \mathcal T^{n}\|_2<\infty,
\end{align*}
the result follows.
\end{proof}

Next, we introduce the moment estimates for the maxima of partial sums. These are modifications of \cite[Proposition 3.3]{DH24} and \cite[Proposition 6.6]{DH24}. We denote $\Psi_i=\psi_{i}^2$ and $S_n\Psi=\sum_{i=0}^{n-1}\Psi_{i}\circ \mathcal T^i$. Recall that
$S_n\bar v=\sum_{i=0}^{n-1}\bar v_{i}\circ \mathcal T^i$.
\begin{prop}\label{VV}
There exists a constant $C>0$ (independent of $n$) such that for all $n\ge 1$,
\[
Var(S_n\Psi)\le C(1+Var(S_n\bar v)).
\]
\end{prop}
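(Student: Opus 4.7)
The plan is to expand $\text{Var}(S_n\Psi)$ into diagonal and off-diagonal pieces, bound each using the exponential decay provided by (DEC), and finally convert the resulting $\sigma_n^2$-bound into a $\Sigma_n^2$-bound via Proposition~\ref{SD}. A preliminary observation is that $\psi_n$ lies in $\mathcal V$ with $\sup_n\|\psi_n\|_\alpha<\infty$ (because $\|h_n\|_\alpha$ is uniformly bounded), hence $\Psi_n=\psi_n^2\in\mathcal V$ with uniformly bounded norm, assuming $\mathcal V$ is an algebra (which holds in the two standard examples singled out in Section~2). In particular, setting $c_i':=\mathbb E(\Psi_i\circ\mathcal T^i)$, one has $\sup_i c_i'<\infty$ and $\sum_{i=0}^{n-1}c_i'=\sigma_n^2$.

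For the diagonal I use $\text{Var}(\Psi_i\circ\mathcal T^i)\le \mathbb E(\psi_i^4\circ\mathcal T^i)\le\|\psi_i\|_\infty^2\,c_i'\le C\,c_i'$, so $\sum_i\text{Var}(\Psi_i\circ\mathcal T^i)\le C\sigma_n^2$. For the off-diagonal I would apply the two identities $\int G\circ\mathcal T^k\,\rmd m=\int\mathcal P^k 1\cdot G\,\rmd m$ and $\int f\cdot g\circ T_\alpha\,\rmd m=\int P_\alpha f\cdot g\,\rmd m$ iteratively to rewrite, for $i<j$,
\[
\mathbb E(\Psi_i\circ\mathcal T^i\cdot\Psi_j\circ\mathcal T^j)=\int\psi_j^2\cdot(P_j\circ\cdots\circ P_{i+1})(\mathcal P^i 1\cdot\psi_i^2)\,\rmd m.
\]
Splitting $\psi_i^2=c_i'+(\psi_i^2-c_i')$, the constant piece yields $c_i'\int\psi_j^2\mathcal P^j1\,\rmd m=c_i'c_j'=\mathbb E(\Psi_i\circ\mathcal T^i)\,\mathbb E(\Psi_j\circ\mathcal T^j)$, which cancels in the covariance. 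The remaining input $\mathcal P^i 1\cdot(\psi_i^2-c_i')$ has zero Lebesgue integral and uniformly bounded $\|\cdot\|_\alpha$-norm, so (DEC) gives $\|(P_j\circ\cdots\circ P_{i+1})(\mathcal P^i 1\cdot(\psi_i^2-c_i'))\|_\infty\le C\gamma^{j-i}$. Combining this with the $L^1$-bound $\int\psi_j^2\,\rmd m\le\delta^{-1}\int\psi_j^2\mathcal P^j1\,\rmd m=\delta^{-1}c_j'$ provided by (MIN) yields $|\text{Cov}(\Psi_i\circ\mathcal T^i,\Psi_j\circ\mathcal T^j)|\le C\gamma^{j-i}c_j'$, and summing the geometric series in $i$ produces $\sum_{i<j}|\text{Cov}|\le C(1-\gamma)^{-1}\sigma_n^2$. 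Adding the diagonal piece gives $\text{Var}(S_n\Psi)\le C\sigma_n^2$, and Proposition~\ref{SD} converts this into $C(1+\Sigma_n^2)=C(1+\text{Var}(S_n\bar v))$ via $\sigma_n\le\Sigma_n+C$ and the elementary $(\Sigma_n+C)^2\le 2\Sigma_n^2+2C^2$.

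The main technical obstacle is verifying that $\mathcal P^i 1\cdot(\psi_i^2-c_i')$ truly has $\|\cdot\|_\alpha$-norm uniform in $i$, which is what lets (DEC) be invoked cleanly. This requires $\sup_i\|\mathcal P^i 1\|_\alpha<\infty$ (a Lasota--Yorke type iteration, of the same flavor as (SUP)), uniform control on $\|\psi_i\|_\alpha$ (inherited from the uniform bound on $\|h_n\|_\alpha$ recalled just above the proposition), and that $\mathcal V$ is stable both under multiplication and under composition with the maps $T_\alpha$ (so that $h_{i+1}\circ T_{i+1}\in\mathcal V$). All of these are routine in the BV and H\"older settings from Section~2, but must be checked separately in each concrete application.
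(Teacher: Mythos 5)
Your proposal is correct in substance and follows the same skeleton as the paper's proof: expand $Var(S_n\Psi)$ into covariances, pass to transfer operators via the duality \eqref{trans}, use (DEC) for exponential off-diagonal decay and (MIN) to compare $\int\Psi_j\,\rmd m$ with $\E(\Psi_j\circ\cal T^j)$, and finally convert the resulting bound $C\sigma_n^2$ into $C(1+Var(S_n\bar v))$ through Proposition~\ref{SD}. The genuine difference is in the bookkeeping of the density $\cal P^i1$. You keep it inside the operator and apply (DEC) to $\cal P^i1(\psi_i^2-c_i')$, which has exactly zero Lebesgue mean, so (DEC) applies verbatim; the paper is in fact looser on this point, since it applies (DEC) to $g_k=\Psi_k-\E(\Psi_k\circ\cal T^k)$, which is centered with respect to the pushforward rather than Lebesgue measure. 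The price of your version is the bound $\sup_i\|\cal P^i1\|_\alpha<\infty$ together with the Banach-algebra property of $\cal V$, neither of which is among the stated hypotheses (DEC)/(MIN)/(SUP); the paper instead pulls $\cal P^k1$ out in sup-norm--this is precisely what (SUP) was introduced for--and only then lets (DEC) act on the centered observable. Your extra requirements are harmless in practice: the $\alpha$-norm control of $\cal P^n1$ follows from the Lasota--Yorke inequality (DFLY) in every application, and the paper itself uses both facts implicitly (e.g.\ to get $\sup_n\|h_n\|_\alpha<\infty$ and in the claim inside Proposition~\ref{vn1}), as you correctly flag. In short: the same approach, with your version trading the stated hypothesis (SUP) for the implicitly assumed, slightly stronger regularity of $\cal P^i1$, and in exchange handling the zero-mean requirement of (DEC) more carefully.
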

\begin{proof}
Denote $g_i=\Psi_i-\int_M \Psi_i\circ \cal T^i\rmd m$ and $S_ng=\sum_{i=0}^{n-1}g_{i}\circ \mathcal T^i$.
Note that $\sup_n\|g_n\|_\alpha<\infty$ and $\sup_n\|\mathcal P^n1\|_\infty<\infty$.
It follows from \eqref{trans} and the property (DEC) that
\begin{align*}
Var(S_n\Psi)&=\E[(S_ng)^2]\le 2\sum_{0\le l<n}\sum_{0\le k\le l} |\int (g_{l}\circ \mathcal T^l)(g_{k}\circ \mathcal T^k)\rmd m|\\
&=2\sum_{0\le l<n}\sum_{0\le k\le l} |\int (g_{l}\circ T_l\circ T_{l-1}\circ \cdots \circ T_{k+1}\cdot g_{k})\circ \mathcal T^k\rmd m|\\
&=2\sum_{0\le l<n}\sum_{0\le k\le l} |\int (g_{k}\mathcal P^k1)g_{l}\circ T_l\circ T_{l-1}\circ \cdots \circ T_{k+1}\rmd m|\\
&\le 2\sup_n\|\mathcal P^n1\|_\infty\sum_{0\le l<n}\sum_{0\le k\le l} |\int g_{k}\cdot g_{l}\circ T_l\circ T_{l-1}\circ \cdots \circ T_{k+1}\rmd m|\\
&= C\sum_{0\le l<n}\sum_{0\le k\le l} |\int P_l\circ P_{l-1}\circ \cdots \circ P_{k+1}(g_{k})\cdot g_{l}\rmd m|\\
&\le C\sum_{0\le l<n}\sum_{0\le k\le l}\int |g_{l}|\rmd m\cdot \|P_l\circ P_{l-1}\circ \cdots \circ P_{k+1}(g_{k})\|_\alpha\\
&\le C\sum_{0\le l<n}\int |g_{l}|\rmd m \Big(\sum_{0\le k\le l} \gamma^{l-k}\|g_{k}\|_\alpha\Big)\\
&\le C\sum_{0\le l<n}\int |g_{l}|\rmd m\le C\sum_{0\le l<n}\int \Psi_l\rmd m\\
&\le C\frac{1}{\delta}\sum_{0\le l<n}\int \Psi_l\mathcal P^l1\rmd m\\
&=  C\frac{1}{\delta}\sum_{0\le l<n}\int \Psi_l\circ \cal T^l\rmd m.
\end{align*}
Since $\sum_{0\le l<n}\int \Psi_l\circ \cal T^l\rmd m=Var(S_n\psi)$ and $Var(S_n\psi)\le C(1+Var(S_n\bar v))$, the result follows.
\end{proof}

\begin{prop}\label{mom}
For every $p\ge 2$, there exists a constant $C>0$ (independent of $n$) such that for all $n\ge 1$,
\[
\Big\|\max_{1\le k\leq n}|\sum_{i=0}^{k-1}\bar v_{i}\circ \mathcal T^i|\Big\|_{p}\leq C\Big(1+\Big\|\sum_{i=0}^{n-1}\bar v_{i}\circ \mathcal T^i\Big\|_{2}\Big).
\]
\end{prop}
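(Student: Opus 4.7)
The plan is to invoke the reverse martingale--coboundary decomposition recalled just before Proposition \ref{SD}, namely
\[
\sum_{i=0}^{k-1}\bar v_i\circ\cal T^i = \sum_{i=0}^{k-1}\psi_i\circ\cal T^i + h_k\circ\cal T^k,
\]
and to use $\sup_k\|h_k\|_\infty<\infty$ to pull the coboundary out of the maximum, reducing the task to estimating the maximum of the reverse-martingale partial sums $S_k\psi:=\sum_{i=0}^{k-1}\psi_i\circ\cal T^i$. After time-reversal $k\mapsto n-k$ (relative to the increasing filtration $\widetilde{\cal B}_k:=\cal B_{n-k}$) the process $(S_k\psi)$ becomes a forward martingale up to an additive term $S_n\psi$, so Doob's $L^p$ maximal inequality together with the Burkholder--Davis--Gundy inequality yield
\[
\Big\|\max_{1\le k\le n}|S_k\psi|\Big\|_p \le C_p\,\|(S_n\Psi)^{1/2}\|_p,\qquad \Psi_i:=\psi_i^2.
\]

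Next I center the quadratic variation. Writing $S_n\Psi = \sigma_n^2 + S_n g$ with $g_i:=\Psi_i-\int\Psi_i\circ\cal T^i\,\rmd m$ and using the elementary bound $\sqrt{a+b}\le\sqrt{|a|}+\sqrt{|b|}$,
\[
\|(S_n\Psi)^{1/2}\|_p \le \sigma_n + \|S_n g\|_{p/2}^{1/2}.
\]
Since Proposition \ref{SD} gives $\sigma_n\le C(1+\Sigma_n)$, the whole problem reduces to showing $\|S_n g\|_{p/2}\le C(1+\Sigma_n)^2$.

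I close this by induction on $p$, noting that $g$ satisfies the same hypotheses as $\bar v$: the $g_i$ are mean-zero and $\{\Psi_i\}$ is uniformly bounded in $\cal V$ (the algebraic closure of $\cal V$ already used implicitly in the proof of Proposition \ref{VV}). For $p\le 4$ one has $p/2\le 2$, so
\[
\|S_n g\|_{p/2}\le\|S_n g\|_2=\sqrt{\mathrm{Var}(S_n\Psi)}\le C(1+\Sigma_n)
\]
by Proposition \ref{VV}. For $p>4$, the induction hypothesis applied at exponent $p/2$ to the sequence $g$ gives
\[
\|S_n g\|_{p/2}\le\Big\|\max_{1\le k\le n}|S_k g|\Big\|_{p/2}\le C(1+\|S_n g\|_2)\le C(1+\Sigma_n),
\]
again using Proposition \ref{VV}. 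Substituting back yields $\|\max_k|S_k\bar v|\|_p\le C_p(1+\Sigma_n)$, as claimed. The main obstacle is the bookkeeping of the induction: at each round one must verify that the next ``squared martingale difference'' sequence remains uniformly bounded in $\cal V$, which relies on the closure of $\cal V$ under products (and, where relevant, under composition with the $T_n$). This holds in the H\"older and BV settings of the paper, so the recursion terminates after $\lceil\log_2(p/2)\rceil$ rounds with constants depending only on $p$.
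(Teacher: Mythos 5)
Your argument is essentially the paper's own proof: the same reverse martingale--coboundary decomposition, time reversal plus Doob and Burkholder to reduce to the quadratic variation $S_n\Psi$, centering $S_n\Psi=\sigma_n^2+S_ng$, Proposition~\ref{VV} to control $\|S_ng\|_2$, and an induction on the exponent (the paper runs it over $p=2^m$) applied to the sequence $g$, with the same implicit reliance on $\sup_i\|g_i\|_\alpha<\infty$, i.e.\ on $\mathcal V$ being stable under the products and compositions involved. So the proposal is correct and matches the paper's route, with only cosmetic differences in how the induction is bookkept.
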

\begin{proof}
It is enough to show the result for $p=2^m$ for all $m\ge 1$. We use induction on $m$. When $m=1$, since $\{\psi_{n-i}\circ \mathcal T^{n-i}\}_{1\le i\le n}$ is a sequence of martingale differences, by Doob's martingale inequality and Proposition~\ref{SD},
\begin{align*}
&\Big\|\max_{1\le k\leq n}|\sum_{i=1}^{k}\bar v_{n-i}\circ \mathcal T^{n-i}|\Big\|_{2}\\
\le &\Big\|\max_{1\le k\leq n}|\sum_{i=1}^{k}\psi_{n-i}\circ \mathcal T^{n-i}|\Big\|_{2}+\max_{1\le k\leq n}\|h_k\|_\alpha\\
\le &4\Big\|\sum_{i=1}^{n}\psi_{n-i}\circ \mathcal T^{n-i}\Big\|_{2}+\max_{1\le k\leq n}\|h_k\|_\alpha\\
\le &C(1+\|S_n\bar v\|_2).
\end{align*}
The result for $m=1$ holds. We assume that the statement is true for some $m>1$, that is
\begin{align}\label{2m}
\Big\|\max_{1\le k\leq n}|\sum_{i=1}^{k}\bar v_{n-i}\circ \mathcal T^{n-i}|\Big\|_{2^m}\leq C(1+\|S_n\bar v\|_2).
\end{align}
We aim to estimate $\big\|\max_{1\le k\leq n}|\sum_{i=1}^{k}\bar v_{n-i}\circ \mathcal T^{n-i}|\big\|_{2^{m+1}}$.
Similar with the argument for $m=1$, we have
\[
\Big\|\max_{1\le k\leq n}|\sum_{i=1}^{k}\bar v_{n-i}\circ \mathcal T^{n-i}|\Big\|_{2^{m+1}}\le C_m\Big\|\sum_{i=1}^{n}\psi_{n-i}\circ \mathcal T^{n-i}\Big\|_{2^{m+1}}+\max_{1\le k\leq n}\|h_k\|_\alpha.
\]
It suffices to prove that
\[
\Big\|\sum_{i=1}^{n}\psi_{n-i}\circ \mathcal T^{n-i}\Big\|_{2^{m+1}}\le C(1+\|S_n\bar v\|_2).
\]
By Burkholder's inequality,
\begin{align}\label{m+1}
\Big\|\sum_{i=1}^{n}\psi_{n-i}\circ \mathcal T^{n-i}\Big\|_{2^{m+1}}\le C'_m\Big\|\sum_{i=1}^{n}\psi_{n-i}^2\circ \mathcal T^{n-i}\Big\|_{2^{m}}^{1/2}.
\end{align}
Applying \eqref{2m} to $g_i=\psi_i^2-\int_M \psi_i^2\circ \cal T^i\rmd m$, we have
\[
\big\|\max_{1\le k\leq n}|\sum_{i=1}^{k} g_{n-i}\circ \mathcal T^{n-i}|\big\|_{2^m}\leq C(1+\|S_ng\|_2).
\]
We can estimate that
\begin{align*}
\Big\|\sum_{i=1}^{n}\psi_{n-i}^2\circ \mathcal T^{n-i}\Big\|_{2^{m}}&\le \big\|S_ng\big\|_{2^m}+\E\Big(\sum_{i=0}^{n-1}\psi_{i}^2\circ \mathcal T^i\Big)\\
&\le C(1+\|S_ng\|_2)+\E\Big(\sum_{i=0}^{n-1}\psi_{i}^2\circ \mathcal T^i\Big)\\
&\le C(1+C(1+Var(S_n\bar v)))+\E\Big(\sum_{i=0}^{n-1}\psi_{i}^2\circ \mathcal T^i\Big),
\end{align*}
where the last inequality is due to Proposition~\ref{VV}.
Note that
$\E\Big(\sum_{i=0}^{n-1}\psi_{i}^2\circ \mathcal T^i\Big)=Var(S_n\psi)$ and $Var(S_n\psi)\le C(1+Var(S_n\bar v))$. Combining with \eqref{m+1}, we have
\[
\Big\|\sum_{i=1}^{n}\psi_{n-i}\circ \mathcal T^{n-i}\Big\|_{2^{m+1}}\le C(1+Var(S_n\bar v)^{1/2})=C(1+\|S_n\bar v\|_2).
\]
Writing $\sum_{i=0}^{k-1}\bar v_{i}\circ \mathcal T^i=\sum_{i=1}^{n}\bar v_{n-i}\circ \mathcal T^{n-i}-\sum_{j=1}^{n-k}\bar v_{n-j}\circ \mathcal T^{n-j}$, we can obtain the result.
\end{proof}

\begin{rem}\label{mar}
By the argument of the proof of Proposition~\ref{mom}, we also obtain the result for $\psi_{i}$. Namely, for every $p\ge 2$,
\[\Big\|\max_{k\leq n-1}\big|\sum_{i=0}^{k-1}\psi_{i}\circ \mathcal T^{i}\big|\Big\|_{p}\leq C(1+\|S_n\psi\|_2).\]
\end{rem}

\begin{cor}\label{var}
$\Sigma_{n}^{2}=\sigma_{n}^{2}+O(\sigma_n)$, where $\sigma_n^2=\E(\sum_{i=0}^{n-1}\psi_i\circ \cal T^i)^2$.
\end{cor}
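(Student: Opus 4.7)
\textbf{Proof plan for Corollary~\ref{var}.} The statement is a direct consequence of Proposition~\ref{SD}, obtained simply by passing from the $L^2$-norm identity $\Sigma_n-\sigma_n=O(1)$ to the corresponding statement for $\Sigma_n^2-\sigma_n^2$. My plan is to factor the difference of squares and apply the triangle inequality to each factor.

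More precisely, I would write
\[
\Sigma_n^2-\sigma_n^2=(\Sigma_n-\sigma_n)(\Sigma_n+\sigma_n),
\]
and then control the two factors separately. For the first factor, Proposition~\ref{SD} yields $|\Sigma_n-\sigma_n|\le \|h_n\circ\mathcal T^n\|_2\le C$, where $C$ is independent of $n$ thanks to the uniform bound $\sup_n\|h_n\|_\alpha<\infty$ established before Proposition~\ref{SD} (which in turn used (DEC), (MIN) and (SUP)). For the second factor, the same estimate gives
\[
\Sigma_n+\sigma_n\le 2\sigma_n+|\Sigma_n-\sigma_n|\le 2\sigma_n+C.
\]
Combining the two bounds,
\[
|\Sigma_n^2-\sigma_n^2|\le C(2\sigma_n+C)=O(\sigma_n),
\]
which is exactly the claimed identity. (In the regime where $\sigma_n$ is bounded, the right-hand side is $O(1)=O(\sigma_n+1)$; in the regime of interest $\sigma_n\to\infty$ and the constant term is absorbed into $O(\sigma_n)$.)

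There is no essential obstacle here; the only subtlety worth flagging is making sure that $|\Sigma_n-\sigma_n|$ is genuinely bounded by a constant \emph{independent of $n$}, and this has already been verified in the proof of Proposition~\ref{SD} via the uniform $L^\infty$ (in fact $\|\cdot\|_\alpha$) bound on $h_n$. Thus the corollary is a one-line consequence of Proposition~\ref{SD} and requires no further probabilistic input.
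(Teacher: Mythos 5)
Your argument is correct and gives exactly the same strength of estimate as the paper's, namely $|\Sigma_n^2-\sigma_n^2|\le C(1+\sigma_n)$, but it reaches it by a slightly different (and shorter) route. You square the conclusion of Proposition~\ref{SD}, factoring $\Sigma_n^2-\sigma_n^2=(\Sigma_n-\sigma_n)(\Sigma_n+\sigma_n)$ at the level of the $L^2$-norms; the paper instead factors the difference of squares inside the integral, using $S_n\bar v-S_n\psi=h_n\circ\mathcal T^n$ to write $\Sigma_n^2-\sigma_n^2=\int h_n^2\circ\mathcal T^n\,\rmd m+2\int (\sum_{i=0}^{n-1}\psi_i\circ\mathcal T^i)\,h_n\circ\mathcal T^n\,\rmd m$ and then applies Cauchy--Schwarz (citing Remark~\ref{mar} at the end). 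Both proofs rest on the same two facts — the martingale--coboundary decomposition and the uniform bound on $\|h_n\|_\infty$ — so the mathematical content is identical; what your version buys is economy: no further probabilistic input is needed, since $\bigl\|\sum_{i=0}^{n-1}\psi_i\circ\mathcal T^i\bigr\|_2=\sigma_n$ by definition, so neither Remark~\ref{mar} nor the moment estimates enter. One clause worth adding for completeness: Proposition~\ref{SD} is stated with $\sigma_n^2=\sum_{i=0}^{n-1}\E(\psi_i^2\circ\mathcal T^i)$ while the corollary uses $\sigma_n^2=\E(\sum_{i=0}^{n-1}\psi_i\circ\mathcal T^i)^2$; these coincide by orthogonality of the reverse martingale differences, as the paper notes at the start of Section~4, so your appeal to Proposition~\ref{SD} is legitimate. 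Finally, the caveat you flag is real but harmless and shared by the paper: both bounds carry an additive constant, so strictly one gets $\Sigma_n^2=\sigma_n^2+O(1+\sigma_n)$, which is what is actually used later in the form $\Sigma_n^2\le\sigma_n^2+B\Sigma_n$, and the distinction disappears in the only interesting regime $\sigma_n\to\infty$.
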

\begin{proof}
We can write
\begin{align*}
&\Sigma_{n}^{2}-\sigma_{n}^{2}
=\int \Big(\sum_{i=0}^{n-1}\bar v_{i}\circ \mathcal T^i\Big)^2{\rm d}m-\int \Big(\sum_{i=0}^{n-1}\psi_{i}\circ \mathcal T^i\Big)^2{\rm d}m\\
=&\int \Big(\sum_{i=0}^{n-1}\bar v_{i}\circ \mathcal T^i-\sum_{i=0}^{n-1}\psi_{i}\circ \mathcal T^i\Big)\Big(\sum_{i=0}^{n-1}\bar v_{i}\circ \mathcal T^i+\sum_{i=0}^{n-1}\psi_{i}\circ \mathcal T^i\Big){\rm d}m\\
=&\int h_{n}\circ \mathcal T^{n}\Big(2\sum_{i=0}^{n-1}\psi_{i}\circ \mathcal T^i+h_{n}\circ \mathcal T^{n}\Big){\rm d}m\\
=&\int h_{n}^2\circ \mathcal T^{n}{\rm d}m+2\int \Big(\sum_{i=0}^{n-1}\psi_{i}\circ \mathcal T^i\Big)h_{n}\circ \mathcal T^{n}{\rm d}m\\
\le &\big\|h_n\big\|_\infty^2+2\Big\|\sum_{i=0}^{n-1}\psi_{i}\circ \mathcal T^i\Big\|_2\big\|h_{n}\circ \mathcal T^{n}\big\|_2.	
\end{align*}
Then the result follows from Remark~\ref{mar}.
\end{proof}

\section{Proof of Theorem \ref{thnon}}
\setcounter{equation}{0}
Recall that $\sigma_n^2=\E(\sum_{i=0}^{n-1}\psi_i\circ \cal T^i)^2=\sum_{i=0}^{n-1}\E(\psi_i^2\circ \cal T^i)$. For every $t\in[0,1]$, set
\[
r_n(t):=\min\{1\le k\le n: t\sigma_n^2\le\sigma_{k}^2\}.
\]
Similar to $W_n$, we define the following continuous processes $M_{n}(t)\in C[0,1]$ by
\begin{equation}\label{exp}
M_{n}(t):=\frac{1}{\sigma_n}\bigg[\sum_{i=0}^{r_n(t)-1}\psi_i\circ \cal T^i+\frac{t\sigma_n^2-\sigma_{r_n(t)-1}^2}{\sigma_{r_n(t)}^2-\sigma_{r_n(t)-1}^2}\psi_{r_n(t)}\circ \cal T^{r_n(t)}\bigg],\quad t\in[0,1].
\end{equation}

{\bf Step 1. Estimation of the convergence rate between $W_n$ and $M_n$.}
\begin{lem}\label{wmn}
Let $p\ge 2$. Then for any $\delta>0$, there exists a constant $C>0$ such that for all $n\ge1$,
\[\Big\|\sup_{t\in[0,1]}|W_{n}(t)-M_n(t)|\Big\|_{p}\leq C\Sigma_n^{-\frac{1}{2}+\delta}.\]
\end{lem}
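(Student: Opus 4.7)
The plan is to pass from $W_n$ to $M_n$ in three swaps—replace $\bar v_i$ by $\psi_i$, replace $\Sigma_n$ by $\sigma_n$, and replace the time-change $N_n(t)$ by $r_n(t)$—and bound the $L^p$ error introduced at each stage. The first two swaps are easy. The telescoping identity $\sum_{i=0}^{k-1}\bar v_i\circ\cal T^i=\sum_{i=0}^{k-1}\psi_i\circ\cal T^i-h_k\circ\cal T^k$ together with $\sup_k\|h_k\|_\infty<\infty$ controls the coboundary error uniformly by $O(\Sigma_n^{-1})$. Proposition~\ref{SD} gives $|\Sigma_n^{-1}-\sigma_n^{-1}|=O(\Sigma_n^{-2})$, and Remark~\ref{mar} controls $\max_k|\sum_{i=0}^{k-1}\psi_i\circ\cal T^i|$ in $L^p$ by $C(1+\sigma_n)$, so the scaling error is $O(\Sigma_n^{-1})$ in $L^p$. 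The linear-interpolation pieces in \eqref{wnt} and \eqref{exp} are pointwise bounded by $\|\bar v_{N_n(t)}\|_\infty/\Sigma_n$ and $\|\psi_{r_n(t)}\|_\infty/\sigma_n$ respectively, again $O(\Sigma_n^{-1})$. All three are negligible against the target $\Sigma_n^{-1/2+\delta}$.

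The crux is to bound $\sigma_n^{-1}\bigl\|\sup_t|S_{N_n(t)}\psi-S_{r_n(t)}\psi|\bigr\|_p$. Using the defining inequalities of $N_n(t)$ and $r_n(t)$ together with $\Sigma_k^2-\sigma_k^2=O(\sigma_n)$ from Corollary~\ref{var}, I first derive the deterministic estimate $|\sigma_{N_n(t)}^2-\sigma_{r_n(t)}^2|=O(\sigma_n)$, uniformly in $t$. For a parameter $\eta>0$ to be chosen later I introduce the deterministic partition $0=\tau_0<\tau_1<\cdots<\tau_L\le n$ defined by $\tau_{j+1}=\min\{k>\tau_j:\sigma_k^2\ge\sigma_{\tau_j}^2+\sigma_n^{1+2\eta}\}$. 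Because the one-step increments of $\sigma_k^2$ are uniformly bounded, each block has $\sigma^2$-length $O(\sigma_n^{1+2\eta})$ and the number of blocks satisfies $L\le C\sigma_n^{1-2\eta}$. For $n$ large enough that $\sigma_n^{1+2\eta}$ dominates the $O(\sigma_n)$ gap, $N_n(t)$ and $r_n(t)$ lie in at most two consecutive blocks for every $t$, whence $\sup_t|S_{N_n(t)}\psi-S_{r_n(t)}\psi|\le 2\max_j\max_{k\in[\tau_j,\tau_{j+1}]}|S_k\psi-S_{\tau_j}\psi|$.

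The Doob/Burkholder maximal inequality for the reverse martingale $\bigl\{\sum_{i=k}^{\tau_{j+1}-1}\psi_i\circ\cal T^i\bigr\}_k$, combined with the moment control of Propositions~\ref{VV}--\ref{mom} applied on each sub-block (which is itself a sequential system for which (DEC), (MIN), (SUP) continue to hold), yields $\bigl\|\max_{k\in[\tau_j,\tau_{j+1}]}|S_k\psi-S_{\tau_j}\psi|\bigr\|_p\le C\sigma_n^{1/2+\eta}$, uniformly in $j$. A union bound across the $L$ blocks pays an extra $L^{1/p}\le C\sigma_n^{(1-2\eta)/p}$, producing a final bound of order $\sigma_n^{-1/2+\eta+(1-2\eta)/p}$ after dividing by $\sigma_n$. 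Given $\delta>0$, I would first establish the lemma for $p=p_0$ chosen sufficiently large and $\eta$ sufficiently small that $\eta+(1-2\eta)/p_0<\delta$, and then extend it to each $p\in[2,p_0]$ by the monotonicity $\|\cdot\|_p\le\|\cdot\|_{p_0}$. Combined with $\Sigma_n=\sigma_n+O(1)$ from Proposition~\ref{SD}, this delivers the advertised $C\Sigma_n^{-1/2+\delta}$.

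The main obstacle is precisely the uniform-in-$t$ estimate in the third swap: pointwise in $t$, the index mismatch costs only $\sigma_n^{1/2}$, but passing to $\sup_t$ forces a discretization whose block-length must be long enough to cover the $O(\sigma_n)$ gap yet whose block count $L$ must be small enough to keep the union bound cheap. The arbitrariness of $\delta$ in the theorem statement is what provides the slack needed to absorb the $L^{1/p}$ loss into the exponent.
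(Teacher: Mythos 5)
Your proposal is correct and follows essentially the same route as the paper: the same three swaps (coboundary, normalization $\Sigma_n\to\sigma_n$, time-change $N_n(t)\to r_n(t)$), a deterministic variance-based block decomposition chosen so that $N_n(t)$ and $r_n(t)$ always fall in the same or adjacent blocks, per-block maximal moment bounds via Propositions~\ref{VV}--\ref{mom}, and a crude maximum-over-blocks bound whose small power loss is absorbed into the arbitrary $\delta$. The only difference is bookkeeping: the paper uses blocks of variance $\asymp\Sigma_n$ and boosts the integrability exponent to $\kappa p$ (Proposition~\ref{mpe}, $\kappa$ large) to make the $Q_n^{1/\kappa p}$ cost negligible, whereas you lengthen the blocks to $\sigma_n^{1+2\eta}$, pay $L^{1/p}$ at a large exponent $p_0$, and then descend to $p\in[2,p_0]$ by monotonicity of $L^p$ norms.
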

\begin{proof}
By Corollary~\ref{var}, there exists a constant $B>0$ such that
\begin{align}\label{C}
\Sigma_n^2\le \sigma_n^2+B\Sigma_n \quad\text{~~or~~}\quad \sigma_n^2\le \Sigma_n^2+B\Sigma_n.
\end{align}
Similar to the construction of the intervals in \cite[Section~4.2]{H23},
we take $b_1$ to be the first value in $\mathbb N$ such that
\[
2B\Sigma_n\le \E\big(\sum_{i=0}^{b_1-1}\bar v_i\circ \cal T^i\big)^2\le \E\big(\max_{1\le m\le b_1}|\sum_{i=0}^{m-1}\bar v_i\circ \cal T^i|\big)^2\le 4B\Sigma_n.
\]
Let $b_2>b_1$ be the smallest value in $\mathbb N$ such that
\[
2B\Sigma_n\le \E\big(\sum_{i=b_1}^{b_2-1}\bar v_i\circ \cal T^i\big)^2\le \E\big(\max_{b_1+1\le m\le b_2}|\sum_{i=b_1}^{m-1}\bar v_i\circ \cal T^i|\big)^2\le 4B\Sigma_n.
\]
Continuing this way, we decompose $\{0,1,\ldots, n-1\}$ into a disjoint union of intervals $I_1,\ldots, I_{Q_n}$ in $\mathbb N$ such that:\\
%
(i) $I_j$ is to the left of $I_{j+1}$, denoted by $I_j=\{a_j,\ldots, b_j\}$, where $a_1=0$ and $a_j=b_{j-1}+1$ for $j\ge 2$;\\
(ii) for each $1\le j\le Q_n$,
\begin{align}\label{decom}
2B\Sigma_n\le \E(S_{I_j}\bar v)^2\le \E\big(\max_{m\in I_j}|\sum_{i=a_j}^{m}\bar v_i\circ \cal T^i|\big)^2\le 4B\Sigma_n,
\end{align}
where $S_I\bar v=\sum_{j\in I}\bar v_j\circ \cal T^j$ for each interval $I$ and the constant $B$ is from \eqref{C}. We point out that in the above construction we may need to absorb the last interval in the penultimate one, but this simply requires replacing $2B\Sigma_n$ with $4B\Sigma_n$, which makes no difference in the following arguments.

We know that for each $t\in [0,1]$, there exists $1\le J\le Q_n$ such that
$N_n(t)\in I_{J}$.
By the condition (ii), we know that $r_n(t)$ is in the same interval $I_J$, or in the adjacent intervals $I_{J-1}$ or $I_{J+1}$.
Indeed, if $r_n(t)\in I_{J+2}$, then by \eqref{C}, we have
\[
\E(S_{I_{J+1}}\bar v)^2\le t\sigma_n^2-t\Sigma_n^2\le B\Sigma_n,
\]
which is a contradiction with (ii). Similarly, if $r_n(t)\in I_{J-2}$,
\[
\E(S_{I_{J-1}}\bar v)^2\le t\Sigma_n^2-t\sigma_n^2\le B\Sigma_n,
\]
which is also a contradiction with (ii).

Since $\Sigma_n=\sigma_n+O(1)$, we have $\|S_{I_j}\bar v\|_2\le \|S_{I_j}\psi\|_2+O(1)$. Then by certain calculations,
we have $C_1\le Q_n/\Sigma_n\le C_2$ for some constants $C_1, C_2$ depending only on $B$.

Consider $\overline W_{n}(t):=\frac{1}{\Sigma_n}\sum_{i=0}^{b_{J(t)}-1}\bar v_i\circ \cal T^i$. Then
\[
\sup_{t\in[0,1]}|W_n(t)-\overline W_{n}(t)|\le \frac{1}{\Sigma_n}\max_{1\le j\le Q_n}|Z_j|+\frac{1}{\Sigma_n}\max_{1\le j\le n}|\bar v_j\circ \cal T^j|,
\]
where $Z_j=\max_{m\in I_j}|S_{m}\bar v-S_{a_j}\bar v|$. By Proposition~\ref{mom} and \eqref{decom}, for all $p\ge 2$,
\[\|Z_j\|_p\le \big\|\max_{m\in I_j}|S_{m}\bar v-S_{a_j}\bar v|\big\|_p\le C'(\Sigma_n)^{1/2}.\]
Then for any $\kappa>1$, by Proposition~\ref{mpe},
\begin{align}\label{est}
&\nonumber\Big\|\sup_{t\in[0,1]}|W_{n}(t)-\overline W_n(t)|\Big\|_{p}\\
&\nonumber\le \frac{1}{\Sigma_n}\Big\|\max_{1\le j\le Q_n}|Z_j| \Big\|_{p}+\frac{1}{\Sigma_n}\max_{1\le j\le n}\|\bar v_j\|_\infty\\
&\nonumber\le \frac{1}{\Sigma_n}\Big\|\max_{1\le j\le Q_n}|Z_j| \Big\|_{\kappa p}+\frac{1}{\Sigma_n}\max_{1\le j\le n}\|\bar v_j\|_\infty\\
&\nonumber\le \frac{1}{\Sigma_n}(Q_n)^{\frac{1}{\kappa p}}\max_{1\le j\le Q_n}\big\|Z_j\big\|_{\kappa p}+\frac{1}{\Sigma_n}\max_{1\le j\le n}\|\bar v_j\|_\infty\\
&\le C\Sigma_n^{-\frac{1}{2}+\frac{1}{\kappa p}}+C\Sigma_n^{-1}\le  C\Sigma_n^{-\frac{1}{2}+\delta}
\end{align}
by choosing $\kappa$ large enough.

Similarly, consider $\overline M_{n}(t):=\frac{1}{\sigma_n}\sum_{i=0}^{b_{J'(t)}-1}\psi_i\circ \cal T^i$,
where $J'\in \{J-1, J, J+1\}$ is such that $r_n(t)\in I_{J'}$. Then
\[
\Big\|\sup_{t\in[0,1]}|M_{n}(t)-\overline M_n(t)|\Big\|_{p}\le C\sigma_n^{-\frac{1}{2}+\frac{1}{\kappa p}}\le  C\sigma_n^{-\frac{1}{2}+\delta}
\]
by choosing $\kappa$ large enough.

Finally, we aim to estimate $\big\|\sup_{t\in[0,1]}|\overline W_{n}(t)-\overline M_n(t)|\big\|_{p}$. When $J'=J$,
\begin{align*}
&\Big\|\sup_{t\in[0,1]}|\overline W_{n}(t)-\overline M_n(t)|\Big\|_{p}\\
\le &\big|\frac{1}{\Sigma_n}-\frac{1}{\sigma_n}\big|\Big\|\sup_{t\in[0,1]}\big|\sum_{i=0}^{b_{J(t)}-1}\bar v_i\circ \cal T^i\big|\Big\|_{p}+\frac{1}{\sigma_n}\Big\|\sup_{t\in[0,1]}\big|\sum_{i=0}^{b_{J(t)}-1}\bar v_i\circ \cal T^i-\sum_{i=0}^{b_{J(t)}-1}
\psi_i\circ \cal T^i\big|\Big\|_{p}
\end{align*}
For the first term, by Propositions~\ref{SD} and ~\ref{mom} , we have for $n\ge1$,
\begin{align}\label{Sig}
&\nonumber\big|\frac{1}{\Sigma_n}-\frac{1}{\sigma_n}\big|\Big\|\sup_{t\in[0,1]}\big|\sum_{i=0}^{b_{J(t)}-1}\bar v_i\circ \cal T^i\big|\Big\|_{p}\\
= &\nonumber\big|\frac{\Sigma_n-\sigma_n}{\Sigma_n\cdot \sigma_n}\big|\Big\|\max_{1\le k\le n}\big|\sum_{i=0}^{k-1}\bar v_i\circ \cal T^i\big|\Big\|_{p}\\
\le &C\frac{1}{\Sigma_n^2}\cdot \Sigma_n= C\Sigma_n^{-1}.
\end{align}
Since $\psi_n=\bar v_{n}+h_n-h_{n+1}\circ T_{n+1}$ and $\psi_n$, $h_n$ are uniformly bounded, we can estimate the second term that
\begin{align*}
\frac{1}{\sigma_n}\Big\|\sup_{t\in[0,1]}\big|\sum_{i=0}^{b_{J(t)}-1}\bar v_i\circ \cal T^i-\sum_{i=0}^{b_{J(t)}-1}
\psi_i\circ \cal T^i\big|\Big\|_{p}
\le \frac{1}{\sigma_n}\max_{0\le j\le n-1}\|h_i\circ \cal T^i\|_\infty
\le C\sigma_n^{-1}.
\end{align*}
When $J'=J-1$, for any $\delta>0$, we have
\begin{align*}
&\Big\|\sup_{t\in[0,1]}|\overline W_{n}(t)-\overline M_n(t)|\Big\|_{p}\\
\le &\Big\|\sup_{t\in[0,1]}\big|\frac{1}{\Sigma_n}\sum_{i=0}^{b_{J(t)}-1}\bar v_i\circ \cal T^i-\frac{1}{\sigma_n}\sum_{i=0}^{b_{J(t)}-1}\bar v_i\circ \cal T^i\big|\Big\|_{p}+\frac{1}{\sigma_n}\Big\|\sup_{t\in[0,1]}\big|\sum_{i=0}^{b_{J(t)}-1}\bar v_i\circ \cal T^i-\sum_{i=0}^{b_{J(t)-1}-1}
\psi_i\circ \cal T^i\big|\Big\|_{p}\\
\le &C\Sigma_n^{-1}+\frac{1}{\sigma_n}\Big\|\max_{1\le j\le Q_n}|Z_j| \Big\|_{p}\le C\Sigma_n^{-\frac{1}{2}+\delta}.
\end{align*}
Here the first term is same as \eqref{Sig} and the estimate for the second term is similar to \eqref{est}. The argument for $J'=J+1$ is same; we omit it.
Based on the above estimates, for any $\delta>0$, we have $\big\|\sup_{t\in[0,1]}|W_{n}(t)-M_n(t)|\big\|_{p}\leq C\Sigma_n^{-\frac{1}{2}+\delta}$ for all $n\ge1$.
\end{proof}

Define
\[
\xi_{n,j}:=\frac{1}{\sigma_n}\psi_{n-j}\circ \mathcal T^{n-j},\ \mathcal{G}_{n,j}:=\cal T^{-(n-j)}\mathcal{B}, \quad\hbox{for } 1\le j\le n.
\]
Then $\{\xi_{n,j}, \mathcal{G}_{n,j}; 1\le j\le n\}$ is a martingale difference array.

For $1\le l\le n$, define the quadratic variation
\[
V_{n,l}:=\sum_{j=1}^{l}\E(\xi_{n,j}^2|\mathcal{G}_{n,j-1}).
\]
For the convenience, we set $V_{n,0}=0$.

Define the following stochastic processes $X_n$ with sample paths in $C[0,1]$ by
\begin{eqnarray}\label{xn}
X_{n}(t):=\sum_{j=1}^{k}\xi_{n,j}+\frac{tV_{n,n}-V_{n,k}}{V_{n,k+1}-V_{n,k}}\xi_{n,k+1}, \qquad \textrm{if}~~~ V_{n,k}\leq tV_{n,n}<V_{n,k+1}.
\end{eqnarray}

\medskip
{\bf Step 2. Estimation of the Wasserstein convergence rate between $X_n$ and $B$.}
\begin{prop}\label{vn1}
Let $p\ge 2$. Then there exists a constant $C>0$ such that for all $n\ge1$,
\[\big\|V_{n,n}-1\big\|_{p}\le C\sigma_n^{-1}.\]
\end{prop}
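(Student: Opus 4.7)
The plan is to rewrite $V_{n,n}-1$ as a properly normalized Birkhoff-type sum and then reuse the variance and $L^p$ estimates developed in Propositions~\ref{VV} and \ref{mom}. Applying \eqref{cexp} with $l=i$ and $k=i+1$, I would write
\[
\E(\psi_i^2\circ \cal T^i\mid \cal B_{i+1})=\phi_i\circ \cal T^{i+1},\qquad \phi_i:=\frac{P_{i+1}(\psi_i^2\cal P^i 1)}{\cal P^{i+1}1}.
\]
Since $\sigma_n^2=\sum_{i=0}^{n-1}\E(\psi_i^2\circ \cal T^i)=\sum_{i=0}^{n-1}\int\phi_i\cal P^{i+1}1\,\rmd m$, setting $\tilde g_i:=\phi_i-\int\phi_i\cal P^{i+1}1\,\rmd m$ leads to
\[
V_{n,n}-1=\frac{1}{\sigma_n^2}\sum_{i=0}^{n-1}\tilde g_i\circ \cal T^{i+1},
\]
with $\int\tilde g_i\circ \cal T^{i+1}\,\rmd m=\int\tilde g_i\cal P^{i+1}1\,\rmd m=0$. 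It thus suffices to show $\bigl\|\sum_{i=0}^{n-1}\tilde g_i\circ \cal T^{i+1}\bigr\|_p=O(\sigma_n)$.

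After checking the uniform regularity $\sup_i\|\tilde g_i\|_\alpha<\infty$ (which relies on (MIN) to control the denominator $\cal P^{i+1}1$, combined with the Banach-algebra structure of $\cal V$ and its invariance under the transfer operator $P_\alpha$), I would mimic the proof of Proposition~\ref{VV} to estimate the variance. For $j>i$, the transfer-operator identity together with (DEC) applied to the zero-mean function $\tilde g_i\cal P^{i+1}1$ gives
\[
\Bigl|\int(\tilde g_i\circ \cal T^{i+1})(\tilde g_j\circ \cal T^{j+1})\,\rmd m\Bigr|=\Bigl|\int P_{j+1}\circ\cdots\circ P_{i+2}(\tilde g_i\cal P^{i+1}1)\cdot\tilde g_j\,\rmd m\Bigr|\le C\gamma^{j-i}\int|\tilde g_j|\,\rmd m.
\]
Combined with the elementary bound $\int|\tilde g_i|\,\rmd m\le C\,\E(\psi_i^2\circ \cal T^i)$ coming from (MIN) and (SUP), summation over $0\le i\le j<n$ yields $Var\bigl(\sum_{i=0}^{n-1}\tilde g_i\circ \cal T^{i+1}\bigr)\le C\sigma_n^2$.

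To upgrade from the variance bound to the required $L^p$ bound, I would rerun the induction on $p=2^m$ from the proof of Proposition~\ref{mom} applied to the sequence $\{\tilde g_i\}$ (using a fresh martingale--coboundary decomposition adapted to the decreasing filtration $\{\cal B_{i+1}\}$ if necessary). This produces
\[
\Bigl\|\sum_{i=0}^{n-1}\tilde g_i\circ \cal T^{i+1}\Bigr\|_p\le C(1+\sigma_n)\le C\sigma_n,
\]
and dividing by $\sigma_n^2$ gives the desired estimate $\|V_{n,n}-1\|_p\le C\sigma_n^{-1}$.

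The main obstacle will be the regularity check. Since $\phi_i$ is built from $\psi_i^2$ and $\psi_i=\bar v_i+h_i-h_{i+1}\circ T_{i+1}$ contains a composition term, one has to verify that $\cal V$ is closed under products and under composition with maps in $\mathcal F$; otherwise the function $\tilde g_i\cal P^{i+1}1$ cannot be located in $\cal V$ with uniformly bounded norm, and the crucial decay estimate via (DEC) fails. In the BV and H\"older frameworks of Section~5 these properties are standard, but any abstract invocation of the proposition would need them as explicit hypotheses. Once this point is handled, the remainder of the proof is a direct reuse of the Section~3 moment-estimate machinery.
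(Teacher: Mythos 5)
Your proposal is correct and follows essentially the same route as the paper: your $\tilde g_i$ coincides (since $P_{i+1}(\mathcal P^i1)=\mathcal P^{i+1}1$) with the paper's centered conditional-expectation functions $\frac{P_{i+1}(g_i\,\mathcal P^i 1)}{\mathcal P^{i+1}1}$, whose membership in $\cal V$ with uniformly bounded norm and zero mean is verified via (MIN) and (SUP), after which the paper likewise imitates Proposition~\ref{VV} for the $L^2$ bound and invokes Proposition~\ref{mom} for the $L^p$ upgrade. The regularity caveat you raise (Banach-algebra structure of $\cal V$) is handled at the same level of rigor in the paper itself, so no genuinely different ingredient is involved.
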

\begin{proof}
For $1\le j\le n$, we denote $\alpha_{j}^{2}=\sum_{i=1}^{j}\int\psi_{n-i}^2\circ \mathcal T^{n-i}\rmd m$. Then $\alpha_{n}^{2}=\sigma_{n}^{2}$ and
\begin{align*}
\big\|V_{n,n}-1\big\|_{p}=\big\|V_{n,n}-\frac{\alpha_{n}^{2}}{\sigma_{n}^{2}}\big\|_{p}.
\end{align*}
To deal with it, we first recall the notation that $g_{n-i}=\psi_{n-i}^2-\E(\psi_{n-i}^2\circ \cal T^{n-i})$ for $1\le i\le n$. Then we can write
\begin{align}\label{vnn1}
\big\|V_{n,n}-1\big\|_{p}&=\frac{1}{\sigma_n^2}\Big\|\sum_{i=1}^{n}\E(\psi_{n-i}^2\circ \mathcal T^{n-i}|\mathcal{G}_{n,i-1})-\sum_{i=1}^{n}\E(\psi_{n-i}^2\circ \mathcal T^{n-i})\Big\|_{p}\\\nonumber
&=\frac{1}{\sigma_n^2}\Big\|\sum_{i=1}^{n}\E\big(\psi_{n-i}^2\circ \mathcal T^{n-i}-\E(\psi_{n-i}^2\circ \mathcal T^{n-i})\big|\mathcal{G}_{n,i-1}\big)\Big\|_{p}\\\nonumber
&=\frac{1}{\sigma_n^2}\Big\|\sum_{i=1}^{n}\E\big(g_{n-i}\circ \mathcal T^{n-i}\big|\mathcal{G}_{n,i-1}\big)\Big\|_{p}\\\nonumber
&=\frac{1}{\sigma_n^2}\Big\|\sum_{i=1}^{n}\frac{P_{n-i+1}(g_{n-i}\cdot\cal{P}^{n-i} 1)}{\cal{P}^{n-i+1} 1}\circ \cal T^{n-i+1}\Big\|_{p},
\end{align}
where the last equation is due to \eqref{cexp}.

We claim that $\frac{P_{n-i+1}(g_{n-i}\cdot\cal{P}^{n-i} 1)}{\cal{P}^{n-i+1} 1}\in \cal V$ and
$\E\big(\frac{P_{n-i+1}(g_{n-i}\cdot\cal{P}^{n-i} 1)}{\cal{P}^{n-i+1} 1}\circ \cal T^{n-i+1}\big)=0$.
Imitating the proof of Proposition~\ref{VV}, we obtain that
\[
\Big\|\sum_{i=1}^{n}\frac{P_{n-i+1}(g_{n-i}\cdot\cal{P}^{n-i} 1)}{\cal{P}^{n-i+1} 1}\circ \cal T^{n-i+1}\Big\|_{2}\le C\Big\|\sum_{i=1}^{n}\psi_{n-i}\circ \mathcal T^{n-i}\Big\|_2.
\]
Then by Proposition~\ref{mom}, for $n\ge1$,
\begin{align*}
&\big\|V_{n,n}-1\big\|_{p}\le \frac{C}{\sigma_n^2}\Big(1+\Big\|\sum_{i=1}^{n}\frac{P_{n-i+1}(g_{n-i}\cdot\cal{P}^{n-i} 1)}{\cal{P}^{n-i+1} 1}\circ \cal T^{n-i+1}\Big\|_{2}\Big)\\
&\le \frac{C}{\sigma_n^2}\Big(1+\Big\|\sum_{i=1}^{n}\psi_{n-i}\circ \mathcal T^{n-i}\Big\|_2\Big)
\le C\sigma_n^{-1}.
\end{align*}

Next, we verify the claim. It is obvious that
\begin{align*}
&\int\frac{P_{n-i+1}(g_{n-i}\cdot\cal{P}^{n-i} 1)}{\cal{P}^{n-i+1} 1}\circ \cal T^{n-i+1}\rmd m =
\int P_{n-i+1}(g_{n-i}\cdot\cal{P}^{n-i} 1)\rmd m\\
&=\int g_{n-i}\cdot\cal{P}^{n-i} 1\rmd m=\int g_{n-i}\circ \cal T^{n-i}\rmd m=0.
\end{align*}
Since $\inf_{x\in M}\cal P^n1(x)\ge \delta$ for all $n\ge 1$, we have
\[
\Big\|\frac{P_{n-i+1}(g_{n-i}\cdot\cal{P}^{n-i} 1)}{\cal{P}^{n-i+1} 1}\Big\|_\alpha
\le \frac{1}{\delta}\big\|P_{n-i+1}(g_{n-i}\cdot\cal{P}^{n-i} 1)\big\|_\alpha.
\]
Note that $g_{i}\in \mathcal V$, $1\le i\le n$ and $\sup_n\|\cal{P}^{n} 1\|_\infty<\infty$, we have
\[
\Big\|\frac{P_{n-i+1}(g_{n-i}\cdot\cal{P}^{n-i} 1)}{\cal{P}^{n-i+1} 1}\Big\|_\alpha \le C\|g_{n-i}\|_\alpha.
\]
So
\[
\frac{P_{n-i+1}(g_{n-i}\cdot\cal{P}^{n-i} 1)}{\cal{P}^{n-i+1} 1}\in \cal V.
\]
The claim holds.
\end{proof}

\begin{lem}\label{xnw}
Let $p\ge 2$. Then for any $\delta>0$ there exists a constant $C>0$ such that $\mathcal{W}_{p}(X_{n},B)\leq C \sigma_n^{-1/2+\delta}$ for all $n\ge 1$.
\end{lem}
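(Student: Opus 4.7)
The plan is to apply the martingale Skorokhod embedding (Strassen's representation, see e.g.\ Hall--Heyde, Theorem A.1) to the reverse martingale difference array $\{\xi_{n,j},\mathcal{G}_{n,j}\}_{1\le j\le n}$. On an enlarged probability space this produces a standard Brownian motion $B$, which will play the role of the Wasserstein limit, and an increasing sequence of stopping times $0=T_{n,0}\le T_{n,1}\le\cdots\le T_{n,n}$ for the Brownian filtration such that $B(T_{n,k})=\sum_{j=1}^{k}\xi_{n,j}$ almost surely, with $\mathbb{E}[T_{n,k}-T_{n,k-1}\mid\mathcal{F}_{T_{n,k-1}}]=\mathbb{E}[\xi_{n,k}^{2}\mid\mathcal{G}_{n,k-1}]$ and $\mathbb{E}[(T_{n,k}-T_{n,k-1})^{q}\mid\mathcal{F}_{T_{n,k-1}}]\le C_{q}\mathbb{E}[\xi_{n,k}^{2q}\mid\mathcal{G}_{n,k-1}]$ for each $q\ge 1$. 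Since $\psi_i$ is uniformly bounded, $\|\xi_{n,j}\|_\infty\le C/\sigma_n$, and consequently $\mathbb{E}[\xi_{n,j}^{2q}\mid\mathcal{G}_{n,j-1}]\le (C/\sigma_n)^{2q-2}\mathbb{E}[\xi_{n,j}^{2}\mid\mathcal{G}_{n,j-1}]$.

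In this coupling $\mathcal{W}_{p}(X_{n},B)\le\bigl\|\sup_{t\in[0,1]}|X_{n}(t)-B(t)|\bigr\|_p$. Since $X_{n}(V_{n,k}/V_{n,n})=B(T_{n,k})$ and $X_n$ is piecewise linear between these nodes with slope at most $C/\sigma_n$, for $t\in[V_{n,k}/V_{n,n},V_{n,k+1}/V_{n,n}]$ one has $|X_{n}(t)-B(t)|\le C/\sigma_{n}+|B(T_{n,k})-B(t)|$. On the event $\{V_{n,n}\ge 1/2\}$ the time displacement $|T_{n,k}-t|$ is controlled by the deterministic functional
\[
 \Delta_n:=|V_{n,n}-1|+\max_{0\le k\le n}|T_{n,k}-V_{n,k}|+C/\sigma_n^2,
\]
so that $|B(T_{n,k})-B(t)|\le \omega_B(\Delta_n)$ with $\omega_B(h):=\sup\{|B(s)-B(u)|:s,u\in[0,2],\,|s-u|\le h\}$. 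The complementary event $\{V_{n,n}<1/2\}$ has probability $\le C_q\sigma_n^{-q}$ for every $q\ge1$ by Proposition \ref{vn1}, and its contribution to the $L^p$ norm is negligible after H\"older, thanks to the uniform $L^r$-boundedness of $X_n$ and $B$ for every $r$ (which in turn follows from Doob's maximal inequality and Rosenthal's estimate applied to $T_{n,n}$).

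To estimate $\|\Delta_n\|_p$, Proposition \ref{vn1} directly supplies $\|V_{n,n}-1\|_p\le C\sigma_n^{-1}$. For $\max_k|T_{n,k}-V_{n,k}|$, note that $k\mapsto T_{n,k}-V_{n,k}$ is a martingale whose increment conditional variance is $\le C\sigma_n^{-2}\mathbb{E}[\xi_{n,j}^{2}\mid\mathcal{G}_{n,j-1}]$ (using the embedding moment bound and $\|\xi_{n,j}\|_\infty\le C/\sigma_n$), summing telescopically to $C\sigma_n^{-2}V_{n,n}$; Burkholder--Davis--Gundy, combined with the uniform $L^q$-boundedness of $V_{n,n}$, gives $\|\max_k|T_{n,k}-V_{n,k}|\|_p\le C\sigma_n^{-1}$. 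Hence $\|\Delta_n\|_p\le C\sigma_n^{-1}$ for every $p\ge 2$.

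The final step converts this temporal displacement of order $\sigma_n^{-1}$ into a spatial one of order $\sigma_n^{-1/2+\delta}$ via the Brownian modulus of continuity: $\|\omega_B(h)\|_q\le C_q\sqrt{h\log(2/h)}$ for $h\in(0,1/2]$. Given $\delta>0$, I choose $\varepsilon\in(0,2\delta)$ and set $h_n:=\sigma_n^{-1+\varepsilon}$, splitting on $\{\Delta_n\le h_n\}$ and its complement. On the good event, monotonicity of $\omega_B$ gives $\omega_B(\Delta_n)\le\omega_B(h_n)$ whose $L^p$ norm is $\le C\sigma_n^{-1/2+\varepsilon/2}(\log\sigma_n)^{1/2}\le C\sigma_n^{-1/2+\delta}$; on the bad event, Markov with arbitrary moment $q$ gives $\P(\Delta_n>h_n)\le C_q\sigma_n^{-\varepsilon q}$, and a Cauchy--Schwarz split exploiting the uniform finiteness of all moments of $\omega_B(2)$ makes the contribution $\le C\sigma_n^{-1/2+\delta}$ once $q$ is chosen large enough. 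Combining yields $\mathcal{W}_p(X_n,B)\le C\sigma_n^{-1/2+\delta}$. The main technical obstacle is exactly this last step --- balancing the threshold $h_n$ against both the logarithmic factor from the Brownian modulus and the Markov tail of $\Delta_n$ so that the arbitrarily small exponent $\delta>0$ absorbs all slack while keeping constants independent of $n$; everything else is a standard combination of Skorokhod embedding, Burkholder--Davis--Gundy, and Proposition \ref{vn1}.
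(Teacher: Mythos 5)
Your proposal is correct and follows essentially the same route as the paper: Skorokhod embedding, the $O(\sigma_n^{-1})$ bounds on $\max_k|T_{n,k}-V_{n,k}|$, $|V_{n,n}-1|$ and $\max_k|V_{n,k+1}-V_{n,k}|$, a negligible bad event disposed of by Markov plus H\"older with uniform moment bounds, and near-$\frac12$ Brownian regularity (your modulus-of-continuity moment bound with a threshold split plays exactly the role of the paper's Kolmogorov-continuity H\"older-$\gamma$ version with $\gamma<\frac12$). The only slip is that for $\max_k|T_{n,k}-V_{n,k}|$ plain Burkholder--Davis--Gundy with the predictable bracket is not a valid inequality; you need the Burkholder--Rosenthal form (the paper's Theorem~\ref{bhi}), whose additional term $\mathbb{E}\max_j|\tau_j-\mathbb{E}(\tau_j\mid\mathcal{F}_{j-1})|^p$ is bounded by $C\sum_j\mathbb{E}\,\mathbb{E}(|\xi_{n,j}|^{2p}\mid\mathcal{G}_{n,j-1})=O(\sigma_n^{-(2p-2)})$ using the same embedding moment bound and $\|\xi_{n,j}\|_\infty\le C/\sigma_n$ you already invoke, so the conclusion stands unchanged.
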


\begin{proof}
The proofs are based on the ideas employed in the stationary case in \cite[Lemma~4.4]{Liu23}. To obtain the convergence rate, we have to produce a bound of $\mathcal{W}_{p}(X_{n},B)$ for fixed $n\ge1$. It suffices to deal with a single row of the array $\{\xi_{n,j},\mathcal{G}_{n,j}, 1\le j\le n\}$.

By the Skorokhod embedding theorem (see Theorem \ref{ske}), there exists a probability space (depending on $n$) supporting a standard Brownian motion, still denoted by $B$ which should not cause confusion, and a sequence of nonnegative random variables $\tau_1,\ldots, \tau_n$ such that for $T_i=\sum_{j=1}^{i}\tau_j$ we have $\sum_{j=1}^{i}\xi_{n,j}=B(T_i)$ with $1\le i\le n$. In particular, we set $T_{0}=0$. Then on this probability space and for this Brownian motion, we aim to show that for any $\delta>0$ there exists a constant $C>0$ such that
\begin{align*}
\Big\|\sup_{t\in[0,1]}|X_{n}(t)-B(t)|\Big\|_{p}\leq C\sigma_n^{-\frac{1}{2}+\delta}.
\end{align*}
Thus the result follows from the definition of the Wasserstein distance.

For ease of exposition when there is no ambiguity, we will write $\xi_j$ and $V_k$ instead of $\xi_{n,j}$ and $V_{n,k}$ respectively. Then by the Skorokhod embedding theorem, we can write \eqref{xn} as
\begin{align}\label{set}
X_{n}(t)=B(T_{k})+\bigg(\frac{tV_{n}-V_{k}}{V_{k+1}-V_{k}}\bigg)\big(B(T_{k+1})-B(T_{k})\big),\quad \hbox{if}~V_{k}\leq tV_{n}<V_{k+1}.
\end{align}

1. We first estimate $|X_{n}-B|$ on the set $\{|T_{n}-1|\ge 1\}$.
 Note that Theorem \ref{ske} (3) implies
\[
T_k-V_k=\sum_{i=1}^{k}\big(\tau_{i}-\mathbb{E}(\tau_{i}|\mathcal{F}_{i-1})\big),\quad 1\le k\le n,
\]
where $\mathcal{F}_{i}$ is the $\sigma$-field generated by all events up to $T_i$ for $1\le i\le n$. Therefore $\{T_k-V_k, \mathcal{F}_{k}, 1\le k\le n\}$ is a martingale. By the conditional Jensen inequality,  $|\mathbb{E}(\tau_{i}|\mathcal{F}_{i-1})|^p\le \mathbb{E}(|\tau_{i}|^p|\mathcal{F}_{i-1})$ for $p>1$. Then by Theorem~\ref{bhi}, we have
\begin{align*}
&\E\big[\max_{1\le k\le n}|T_k-V_{k}|^p\big]\\
\le&C\E\Big[\sum_{i=1}^{n}\E\big[|\tau_i-\mathbb{E}(\tau_{i}|\mathcal{F}_{i-1})|^2\big|\mathcal{F}_{i-1}\big]\Big]^{p/2}\\
&\quad\quad+C\E\Big[\max_{1\le i\le n}|\tau_i-\mathbb{E}(\tau_{i}|\mathcal{F}_{i-1})|^p\Big]\\
\le &C\E\Big[\sum_{i=1}^{n}\mathbb{E}(\tau_{i}^2|\mathcal{F}_{i-1})\Big]^{p/2}
+C\E\Big[\sum_{i=1}^{n}\E\big(|\tau_{i}|^p|\mathcal{F}_{i-1}\big)\Big]\\
\le &C\E\Big[\sum_{i=1}^{n}\mathbb{E}(\xi_{i}^4|\mathcal{G}_{i-1})\Big]^{p/2}
+C\E\Big[\sum_{i=1}^{n}\E\big(|\xi_{i}|^{2p}|\mathcal{G}_{i-1}\big)\Big],
\end{align*}
where the last inequality is based on Theorem~\ref{ske} (4).

For the first term, note that $\{\psi_i\}$ is uniformly bounded, by the argument in the proof of Proposition~\ref{vn1}, we have
\begin{align*}
&\E\Big[\sum_{i=1}^{n}\mathbb{E}(\xi_{i}^4|\mathcal{G}_{i-1})\Big]^{p/2}
\le \frac{\sup_i\|\psi_i\|_\infty^p}{\sigma_n^{2p}}\E\Big[\sum_{i=1}^{n}\mathbb{E}(\psi_{n-i}^2\circ \mathcal T^{n-i}|\mathcal{G}_{i-1})\Big]^{p/2}\\
&\le \frac{C}{\sigma_n^{2p}}\E\Big[\sum_{i=1}^{n}\mathbb{E}(g_{n-i}\circ \mathcal T^{n-i}|\mathcal{G}_{i-1})\Big]^{p/2}+\frac{C}{\sigma_n^{2p}}\E\Big[\sum_{i=1}^{n}\mathbb{E}(\psi_{n-i}^2\circ \mathcal T^{n-i})\Big]^{p/2}\\
&\le \frac{C}{\sigma_n^{2p}}\E\Big[\sum_{i=1}^{n}\frac{P_{n-i+1}(g_{n-i}\cdot\cal{P}^{n-i} 1)}{\cal{P}^{n-i+1} 1}\circ \cal T^{n-i+1}\Big]^{p/2}+\frac{C}{\sigma_n^{2p}}\sigma_n^{p}\\
&\le C\sigma_n^{-p}.
\end{align*}
For the second term,
\begin{align*}
&\E\Big[\sum_{i=1}^{n}\E\big(|\xi_{i}|^{2p}|\mathcal{G}_{i-1}\big)\Big]\\
=&\frac{1}{\sigma_n^{2p}}\E\Big[\sum_{i=1}^{n}\E\big(|\psi_{n-i}\circ \mathcal T^{n-i}|^{2p}|\mathcal{G}_{i-1}\big)\Big]\\
\le &\frac{\sup_i\|\psi_i\|_\infty^{2p-2}}{\sigma_n^{2p}}\E\Big[\sum_{i=1}^{n}|\psi_{n-i}\circ \mathcal T^{n-i}|^{2}\Big]\\
\le &\frac{C}{\sigma_n^{2p}}\sigma_n^{2}=C\sigma_n^{-(2p-2)}.
\end{align*}
Based on the above estimates, we have
\begin{equation}\label{TkVk}
\Big\|\max_{1\le k\le n}|T_k-V_{k}|\Big\|_{p} \le C \sigma_n^{-1}.
\end{equation}
On the other hand, it follows from Proposition~\ref{vn1} that
\begin{align}\label{Vn1}
\|V_{n}-1\|_{p}\le  C \sigma_n^{-1}.
\end{align}
Based on the above estimates, by Chebyshev's inequality we have
\begin{equation}\label{eqq}
\begin{split}
&m(|T_{n}-1|>1)\le \mathbb{E}[|T_{n}-1|^{p}] \\
&\le 2^{p-1}\left\{\mathbb{E}[|T_{n}-V_{n}|^{p}]+\mathbb{E}[|V_{n}-1|^{p}]\right\}\le C\sigma_n^{-p}.
\end{split}
\end{equation}
Note that $\Big\|\sup_{t\in[0,1]}|B(t)|\Big\|_{2p}<\infty$ and by Remark~\ref{mar}, we have
$\Big\|\sup_{t\in[0,1]}|X_{n}(t)|\Big\|_{2p}<\infty$.
Hence, by the H\"{o}lder inequality and \eqref{eqq}, we deduce that
\begin{align*}
I:=&\Big\| 1_{\{|T_{n}-1|>1\}}\sup_{t\in[0,1]}|X_{n}(t)-B(t)|\Big\|_{p}\\
\le &\big(m(|T_{n}-1|>1)\big)^{1/2p}\Big\| \sup_{t\in[0,1]}|X_{n}(t)-B(t)|\Big\|_{2p}\\
\le &\big(m(|T_{n}-1|>1)\big)^{1/2p}\Big(\Big\|\sup_{t\in[0,1]}|X_{n}(t)|\Big\|_{2p}+\Big\|\sup_{t\in[0,1]}|B(t)|\Big\|_{2p}\bigg)\\
\le &C\sigma_n^{-\frac{1}{2}}.
\end{align*}

2. We now estimate $|X_{n}-B|$ on the set $\{|T_{n}-1|\le 1\}$:
\begin{align*}
&\Big\| 1_{\{|T_{n}-1|\le 1\}}\sup_{t\in[0,1]}|X_{n}(t)-B(t)|\Big\|_{p}\\
\le &\Big\| 1_{\{|T_{n}-1|\le 1\}}\sup_{t\in[0,1]}|X_{n}(t)-B(T_{k})|\Big\|_{p}+\Big\|1_{\{|T_{n}-1|\le 1\}}\sup_{t\in[0,1]}|B(T_{k})-B(t)|\Big\|_{p}\\
 =: & I_1 + I_2.
\end{align*}
For $I_1$, it follows from \eqref{set} that
\[
\sup_{t\in[0,1]}|X_{n}(t)-B(T_{k})|\le \max_{0\le k\le n-1}|B(T_{k+1})-B(T_{k})|=\max_{0\le k\le n-1}|\xi_{k+1}|.
\]
Since $\{\psi_n\}$ is uniformly bounded, we have
\begin{align*}\label{zeta}
I_1=&\Big\| 1_{\{|T_{n}-1|\le 1\}}\sup_{t\in[0,1]}|X_{n}(t)-B(T_{k})|\Big\|_{p}\\
\le &\Big\| 1_{\{|T_{n}-1|\le 1\}}\max_{0\le k\le n-1}|\xi_{k+1}|\Big\|_{p}\\
\le &\Big\| \max_{0\le k\le n-1}|\xi_{k+1}|\Big\|_{p}\\
\le &\frac{1}{\sigma_n}\max_{0\le k\le n-1}\big\|\psi_{k}\circ\cal T^{k}\big\|_{\infty}\le C\sigma_n^{-1}.
\end{align*}

3. Finally, we consider $I_{2}$ on the set $\{|T_{n}-1|\le 1\}$. Take $p_1>p$, then it is well known that
\begin{equation}\label{bts}
\mathbb{E}|B(t)-B(s)|^{2p_1}\le c|t-s|^{p_1}, \quad \text{for~ all~} s,t\in [0,2].
\end{equation}
So it follows from Kolmogorov's continuity theorem (see Theorem \ref{kcc}) that for each $0<\gamma<\frac{1}{2}-\frac{1}{2p_1}$, the process $B(\cdot)$ admits a version, still denoted by $B$, such that for almost all $\omega$ the sample path $t\mapsto B(t,\omega)$ is H\"{o}lder continuous with exponent $\gamma$ and
\begin{equation*}
\Big\|\sup_{s,t\in[0,2]\atop s\neq t}\frac{|B(s)-B(t)|}{|s-t|^{\gamma}}\Big\|_{2p_1}< \infty.
\end{equation*}
In particular,
\begin{equation}\label{holder}
\Big\|\sup_{s,t\in[0,2]\atop s\neq t}\frac{|B(s)-B(t)|}{|s-t|^{\gamma}}\Big\|_{2p}< \infty.
\end{equation}

As for $|T_{k}-t|$, by certain calculations (see \cite[Lemma~4.4]{Liu23} for details), we have
\begin{align*}
&\sup_{t\in[0,1]}|T_{k}-t|\le \max_{0\le k\le n-1}\sup_{t\in[\frac{V_{k}}{V_{n}},\frac{V_{k+1}}{V_{n}})}|T_{k}-t|\\
&\le \max_{0\le k\le n}\big|T_{k}-V_{k}\big| + 3\max_{0\le k\le n}\Big|V_{k}-\frac{V_{k}}{V_{n}}\Big| +\max_{0\le k\le n-1}\big|V_{k+1}-V_{k}\big|.
\end{align*}
Note that $T_{0}=V_{0}=0$ and $\gamma\le1$, so
\[
\sup_{t\in[0,1]}|T_{k}-t|^{\gamma}\le \max_{1\le k\le n}\left|T_{k}-V_{k}\right|^{\gamma} + 3^\gamma\max_{1\le k\le n}\Big|V_{k}-\frac{V_{k}}{V_{n}}\Big|^{\gamma} +\max_{0\le k\le n-1}\left|V_{k+1}-V_{k}\right|^{\gamma}.
\]
Hence we have
\begin{align}
&\Big\|\sup_{t\in[0,1]}|T_{k}-t|^{\gamma}\Big\|_{2p}\nonumber\\
\le & \Big\|\max_{1\le k\le n}\big|T_{k}-V_{k}\big|\Big\|_{2\gamma p}^{\gamma} +3^\gamma \Big\|\max_{1\le k\le n}\big|V_{k}-\frac{V_{k}}{V_{n}}\big|\Big\|_ {2\gamma p}^{\gamma}
 +\Big\|\max_{0\le k\le n-1}\big|V_{k+1}-V_{k}\big|\Big\|_ {2\gamma p}^{\gamma}.
\end{align}
For the first term, since $\gamma< \frac{1}{2}$, it follows from \eqref{TkVk} that
\begin{equation}\label{Tgam}
\Big\|\max_{1\le k\le n}|T_{k}-V_{k}|\Big\|_{2\gamma p}^{\gamma}\le C\sigma_n^{-\gamma}.
\end{equation}
For the second term, since $|V_{k}-\frac{V_{k}}{V_{n}}|=V_k|1-\frac1{V_n}|$, we have
\[\max_{1\le k\le n}\Big|V_{k}-\frac{V_{k}}{V_{n}}\Big|=V_n \Big|1-\frac1{V_n}\Big|=|V_{n}-1|.\]
Hence by \eqref{Vn1},
\begin{align}\label{Vgam}
\Big\|\max_{1\le k\le n}\big|V_{k}-\frac{V_{k}}{V_{n}}\big|\Big\|_ {2\gamma p}^{\gamma}
=\big\|V_{n}-1\big\|_{2\gamma p}^{\gamma}\le C\sigma_n^{-\gamma}.
\end{align}
As for the last term, note that for all $1\le k\le n$,
\begin{align*}
|V_{k}-V_{k-1}|&=\mathbb{E}(\xi_{k}^2|\mathcal{F}_{k-1})
=\frac{1}{\sigma_n^2}\mathbb{E}\big(\psi_{n-k}^2\circ \mathcal T^{n-k}|\mathcal{G}_{k-1}\big)\\
&=\frac{1}{\sigma_n^2}\cdot\frac{P_{n-k+1}(\psi_{n-k}^2\cdot\cal{P}^{n-k} 1)}{\cal{P}^{n-k+1} 1}\circ \cal T^{n-k+1}.
\end{align*}
Since $\sup_n\max_{k\le n}\|\frac{P_{n-k+1}(\psi_{n-k}^2\cdot\cal{P}^{n-k} 1)}{\cal{P}^{n-k+1} 1}\|_\infty<\infty$, we have
\begin{align}\label{mgam}
&\Big\|\max_{0\le k\le n-1}\big|V_{k+1}-V_{k}\big|\Big\|_ {2\gamma p}^{\gamma}=\frac{1}{\sigma_n^{2\gamma}}\Big\|\max_{1\le k\le n}\big|\frac{P_{n-k+1}(\psi_{n-k}^2\cdot\cal{P}^{n-k} 1)}{\cal{P}^{n-k+1} 1}\circ \cal T^{n-k+1}\big|\Big\|_{2\gamma p}^{\gamma}
\le C\sigma_n^{-2\gamma}.
\end{align}
Based on the above estimates \eqref{Tgam}--\eqref{mgam}, we have
\begin{align}\label{te}
\Big\|\sup_{t\in[0,1]}|T_{k}-t|^{\gamma}\Big\|_{2p}\le C\sigma_n^{-\gamma}.
\end{align}

On the set $\{|T_{n}-1|\le 1\}$, note that
\[
\sup_{t\in[0,1]}|B(T_{k})-B(t)|\le \Big[\sup_{s,t\in[0,2]\atop s\neq t}\frac{|B(s)-B(t)|}{|s-t|^{\gamma}}\Big]\Big[\sup_{t\in[0,1]}|T_{k}-t|^{\gamma}\Big].
\]
Since $0<\gamma<\frac{1}{2}-\frac{1}{2p_1}$, by the H\"{o}lder inequality and \eqref{holder}, \eqref{te}, we have
\begin{align*}
I_2=&\Big\|1_{\{|T_{n}-1|\le 1\}}\sup_{t\in[0,1]}|B(T_{k})-B(t)|\Big\|_{p}\\
\le &\Big\|\Big[\sup_{s,t\in[0,2]\atop s\neq t}\frac{|B(s)-B(t)|}{|s-t|^{\gamma}}\Big]\Big[\sup_{t\in[0,1]}|T_{k}-t|^{\gamma}\Big]\Big\|_{p}\\
\le &\Big\|\sup_{s,t\in[0,2]\atop s\neq t}\frac{|B(s)-B(t)|}{|s-t|^{\gamma}}\Big\|_{2p}\Big\|\sup_{t\in[0,1]}|T_{k}-t|^{\gamma}\Big\|_{2p}\\
\le & C \sigma_n^{-\gamma}.
\end{align*}
Note that $p_1$ can be taken arbitrarily large in \eqref{bts}, which implies that $\gamma$ can be chosen sufficiently close to $\frac{1}{2}$. So for any $\delta>0$, we can choose $p_1$ large enough such that $I_2\le C\sigma_n^{-\frac{1}{2}+\delta}$. The result now follows from the above estimates for $I,I_1$ and $I_2$.
\end{proof}

{\bf Step 3. Estimation of the Wasserstein convergence rate between $M_n$ and $X_n$.}

\vskip3mm

Define a continuous transformation $g:C[0,1]\rightarrow C[0,1]$ by $g(u)(t):=u(1)-u(1-t)$.

\begin{lem}\label{wnxn}
Let $p\ge2$. Then for any $\delta>0$, there exists a constant $C>0$ such that for all $n\ge 1$, $\mathcal{W}_{p}(g\circ M_{n}, X_{n})\leq C\sigma_n^{-\frac{1}{2}+\delta}$.
\end{lem}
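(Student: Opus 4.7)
The key structural observation is that, after the reversal by $g$, both $g\circ M_n$ and $X_n$ are piecewise-linear interpolations through the same skeleton $\Xi_k:=\sum_{j=1}^{k}\xi_{n,j}$ of martingale partial sums; they differ only in the abscissas at which these values are attained. A direct substitution $i\mapsto n-j$ shows $g(M_n)(s_l)=\Xi_l$ at $s_l:=1-\sigma_{n-l}^2/\sigma_n^2$, while $X_n(V_{n,k}/V_{n,n})=\Xi_k$, and a short computation gives $\mathbb{E}[V_{n,k}]=s_k$. The problem therefore reduces to comparing a deterministic and a random reparametrization of the same stepwise curve.

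Mimicking the block decomposition of Lemma~\ref{wmn} but applied to the reversed sequence $\{\psi_{n-j}\circ\mathcal{T}^{n-j}\}_{j=1}^{n}$, I would first partition $\{1,\ldots,n\}$ into consecutive intervals $I_1,\ldots,I_{Q_n}$ satisfying $2B\sigma_n\le\mathbb{E}(\sum_{j\in I_m}\psi_{n-j}\circ\mathcal{T}^{n-j})^2\le 4B\sigma_n$; then $Q_n\asymp\sigma_n$, each block has width $\asymp\sigma_n^{-1}$ in the $s_k$-parametrization, and Proposition~\ref{mom} yields the block-oscillation bound $\|\max_{k\in I_m}|\Xi_k-\Xi_{a_m}|\|_p\lesssim\sigma_n^{-1/2}$. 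For each $t\in[0,1]$, set $l(t):=\max\{l:s_l\le t\}$ and $k(t):=\max\{k:V_{n,k}/V_{n,n}\le t\}$; bounding the interpolation weights by $1$ gives the pointwise estimate
\[
|g(M_n)(t)-X_n(t)|\le |\Xi_{l(t)}-\Xi_{k(t)}|+|\xi_{n,l(t)+1}|+|\xi_{n,k(t)+1}|,
\]
in which the last two terms are deterministically $O(\sigma_n^{-1})$ and therefore negligible. The main term $|\Xi_{l(t)}-\Xi_{k(t)}|$ is then controlled by the block oscillation, provided one can ensure that $l(t)$ and $k(t)$ lie in the same or adjacent blocks.

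Forcing this last condition is the crux and requires the uniform-in-$k$ refinement $\|\max_{k}|V_{n,k}-s_k|\|_p\lesssim\sigma_n^{-1}$, a max-version of Proposition~\ref{vn1}. Starting from the identity that underlies \eqref{vnn1},
\[
V_{n,k}-s_k=\frac{1}{\sigma_n^{2}}\sum_{i=1}^{k}\frac{P_{n-i+1}(g_{n-i}\mathcal{P}^{n-i}1)}{\mathcal{P}^{n-i+1}1}\circ\mathcal{T}^{n-i+1},
\]
I would apply Proposition~\ref{mom} to the functions $f_i:=P_{n-i+1}(g_{n-i}\mathcal{P}^{n-i}1)/\mathcal{P}^{n-i+1}1\in\mathcal{V}$ (shown in the proof of Proposition~\ref{vn1} to have zero mean and uniformly bounded $\|\cdot\|_\alpha$), upgrading the $L^p$-bound from $|V_{n,n}-s_n|$ to the partial-sum maximum $\max_k|V_{n,k}-s_k|$. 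Combined with $\|V_{n,n}-1\|_p\lesssim\sigma_n^{-1}$ (Proposition~\ref{vn1}), this yields $\|\max_k|s_k-V_{n,k}/V_{n,n}|\|_p\lesssim\sigma_n^{-1}$, which matches exactly the block width.

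To assemble the estimate, I would write the supremum over $t$ as a maximum over the $Q_n\asymp\sigma_n$ block endpoints of differences $\Xi_{l}-\Xi_{k}$ with $|l-k|=O(\text{block size})$, and apply $\|\max_j|\cdot|\|_p\le Q_n^{1/(\kappa p)}\max_j\|\cdot\|_{\kappa p}$ (Proposition~\ref{mpe}), exactly as in the passage surrounding \eqref{est}. Choosing $\kappa$ large converts the combinatorial factor $Q_n^{1/(\kappa p)}$ into $\sigma_n^{\delta}$ for arbitrary $\delta>0$, producing $\|\sup_t|g(M_n)(t)-X_n(t)|\|_p\lesssim \sigma_n^{-1/2+\delta}$, whence the Wasserstein bound follows from the definition of $\mathcal{W}_p$. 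The main obstacle is the uniform-in-$k$ refinement of Proposition~\ref{vn1}; once that is in hand, the remainder is structurally parallel to the block argument already carried out in Lemma~\ref{wmn}.
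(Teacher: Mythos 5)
Your plan takes a genuinely different route from the paper. The paper does not compare $g\circ M_n$ and $X_n$ directly: it introduces the reversed process $\widetilde M_n$ (parametrized by $\alpha_l^2/\alpha_n^2$), shows $\sup_t|g\circ M_n(t)-\widetilde M_n(t)|=O(\sigma_n^{-1})$ via the identity $l_n(t)=n-r_n(1-t)$, and then compares both $\widetilde M_n$ and $X_n$ with the time-changed Brownian path $Y_n(t)=B(tT_n)$ coming from the Skorokhod embedding, using the H\"older modulus of $B$ on the event $E_n$ where the two time scales are $\eps$-close, followed by an optimization in $\eps$. Your proposal instead exploits that $g\circ M_n$ and $X_n$ interpolate the same skeleton $\Xi_k=\sum_{j\le k}\xi_{n,j}$ at abscissas $s_k=\alpha_k^2/\sigma_n^2$ and $V_{n,k}/V_{n,n}$ respectively, and compares the two parametrizations directly on the original space through block-oscillation bounds from Proposition~\ref{mom}; this avoids the Brownian modulus entirely in this step and is conceptually cleaner (the identity coupling suffices, since both processes are functionals of the same array). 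The maximal refinement of Proposition~\ref{vn1} that you flag as the crux is indeed available exactly as you describe, and in fact the paper proves the same bound $\E\bigl[\max_j|\alpha_j^2-V_{n,j}\alpha_n^2|^{\kappa p}\bigr]\le C\sigma_n^{\kappa p}$ inside its treatment of $I_1$.

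There is, however, one genuine gap in the way you close the argument. You claim that the bound $\bigl\|\max_k|s_k-V_{n,k}/V_{n,n}|\bigr\|_p\lesssim\sigma_n^{-1}$, ``matching the block width,'' forces $l(t)$ and $k(t)$ into the same or adjacent blocks. An $L^p$ estimate does not yield a pointwise guarantee: splitting on the event $\{\max_k|s_k-V_{n,k}/V_{n,n}|>\eta\}$ with $\eta$ of the same order $\sigma_n^{-1}$ as the block width, Chebyshev gives only an $O(1)$ probability bound, so the exceptional set cannot be discarded and the argument stalls exactly at its key step. The fix is the same device the paper uses with its $\eps$: take the threshold (equivalently, the block width) to be $\sigma_n^{-1+\delta'}$ rather than $\sigma_n^{-1}$, use the fact that the maximal bound holds in $L^{\kappa p}$ for every $\kappa$ so that the bad event has probability $\lesssim\sigma_n^{-\delta'\kappa p}$ and is handled by H\"older against $\|\sup_t|g\circ M_n|\|_{2p}$ and $\|\sup_t|X_n|\|_{2p}$, and on the good event accept that the mismatch spans a variance window of size $\sigma_n^{1+\delta'}$, whose oscillation by Proposition~\ref{mom} and Proposition~\ref{mpe} (with $\kappa$ large) is $\lesssim\sigma_n^{-\frac12+\delta}$. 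With this exceptional-set treatment inserted, your block scheme does deliver $\bigl\|\sup_t|g\circ M_n(t)-X_n(t)|\bigr\|_p\le C\sigma_n^{-\frac12+\delta}$ and hence the lemma; as written, the ``forcing'' step is not justified.
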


\begin{proof}
For $1\le j\le n$, we recall that $\alpha_{j}^{2}=\sum_{i=1}^{j}\int\psi_{n-i}^2\circ \mathcal T^{n-i}\rmd m$. Then $\alpha_{n}^{2}=\sigma_n^2$. We define
\[
\widetilde M_n(t):=\frac{1}{\sigma_n}\bigg[\sum_{i=1}^{l}\psi_{n-i}\circ\cal T^{n-i}+\frac{t\alpha_{n}^{2}-\alpha_{l}^{2}}{\alpha_{l+1}^{2}-\alpha_{l}^{2}}\psi_{n-l-1}\circ\cal T^{n-l-1}\bigg], \quad \textrm{if}~~~~ \alpha_{l}^{2}\leq t\alpha_{n}^{2}<\alpha_{l+1}^{2}.
\]

1. We first estimate $\big\|\sup_{t\in[0,1]}|\widetilde M_n(t)-X_n(t)|\big\|_{p}$. By the Skorokhod embedding theorem in Lemma~\ref{xnw}, we know that there exists a sequence of nonnegative random variables $T_1,\ldots, T_n$ such that $\sum_{j=1}^{i}\frac{1}{\sigma_n}\psi_{n-j}\circ\cal T^{n-j}=B(T_i)$ with $1\le i\le n$.
Define a continuous process $Y_n(t)\in C[0,1]$,
\[Y_n(t):=B(tT_n), \quad t\in [0,1].\]
Then
\begin{align}\label{WYX}
\Big\|\sup_{t\in[0,1]}|\widetilde M_n(t)-X_n(t)|\Big\|_{p}\le \Big\|\sup_{t\in[0,1]}|\widetilde M_n(t)-Y_n(t)|\Big\|_{p}+\Big\|\sup_{t\in[0,1]} |X_n(t)-Y_n(t)|\Big\|_{p}.
\end{align}
On the set $\{|T_n-1|>1/2\}$, by the first step in the proof of Lemma~\ref{xnw}, we have
\begin{align*}
&\Big\|1_{\{|T_{n}-1|>\frac{1}{2}\}}\sup_{t\in[0,1]}|\widetilde M_n(t)-X_n(t)|\Big\|_{p}\\
\le &\big(m(|T_{n}-1|>1/2)\big)^{1/2p}\Big\|\sup_{t\in[0,1]}|\widetilde M_n(t)-X_n(t)|\Big\|_{2p}\\
\le &C\sigma_n^{-1/2}.
\end{align*}

In the following, we aim to estimate \eqref{WYX} on the set $\{|T_n-1|\le1/2\}$. Denote a set $E_n:=\{\max_{1\le j\le n}\big|\frac{T_j}{T_n}-\frac{\alpha_j^2}{\alpha_n^2}\big|\le\eps;
\max_{1\le j\le n}\big|\frac{\alpha_{j}^2}{\alpha_n^2}-\frac{\alpha_{j-1}^2}{\alpha_n^2}\big|\le\eps\}$.
Then
\begin{align*}
&\Big\|\sup_{t\in[0,1]}|\widetilde M_n(t)-Y_n(t)|\Big\|_{p}\\
\le& \Big\|1_{E_n^c}\sup_{t\in[0,1]}|\widetilde M_n(t)-Y_n(t)|\Big\|_{p}\\
&\quad\quad\quad+\Big\|1_{E_n}\sup_{t\in[0,1]}|\widetilde M_n(t)-Y_n(t)|\Big\|_{p}\\
=:&I_1+I_2.
\end{align*}

To deal with $I_1$, we first estimate that for any $\kappa\ge 1$,
\begin{align*}
&\E\Big[\max_{1\le j\le n}\big|\frac{\alpha_j^2}{\alpha_n^2}-\frac{V_{n,j}}{V_{n,n}}\big|^{\kappa p}\Big]\\
\le &2^{\kappa p-1}\Big(\E\big[\max_{1\le j\le n}\big|\frac{\alpha_j^2}{\alpha_n^2}-V_{n,j}\big|^{\kappa p}\big]+\E\big[\max_{1\le j\le n}\big|V_{n,j}-\frac{V_{n,j}}{V_{n,n}}\big|^{\kappa p}\big]\Big)\\
\le &2^{\kappa p}\frac{1}{\alpha_n^{2\kappa p}}\E\big[\max_{1\le j\le n}\big|\alpha_j^2-V_{n,j}\cdot \alpha_n^2\big|^{\kappa p}\big],
\end{align*}
where the last inequality is due to
\[\max_{1\le j\le n}\Big|V_{n,j}-\frac{V_{n,j}}{V_{n,n}}\Big|=V_{n,n}\Big|1-\frac1{V_{n,n}}\Big|=|V_{n,n}-1|\le \max_{1\le j\le n}\big|V_{n,j}-\frac{\alpha_j^2}{\alpha_n^2}\big|.\]
Then it follows from the proof of Proposition~\ref{vn1} and Proposition~\ref{mom} that
\begin{align*}
&\E\Big[\max_{1\le j\le n}\big|\alpha_j^2-V_{n,j}\cdot \alpha_n^2\big|^{\kappa p}\Big]\\
=&\E\Big[\max_{1\le j\le n}\big|\sum_{i=1}^{j}\E(\psi_{n-i}^2\circ \mathcal T^{n-i}|\mathcal{G}_{n,i-1})-\sum_{i=1}^{j}\E(\psi_{n-i}^2\circ \mathcal T^{n-i})\big|^{\kappa p}\Big]\\
=&\E\Big[\max_{1\le j\le n}\big|\sum_{i=1}^{j}\E\big(g_{n-i}\circ \mathcal T^{n-i}\big|\mathcal{G}_{n,i-1}\big)\big|^{\kappa p}\Big]\\\nonumber
=&\E\Big[\max_{1\le j\le n}\big|\sum_{i=1}^{j}\frac{P_{n-i+1}(g_{n-i}\cdot\cal{P}^{n-i} 1)}{\cal{P}^{n-i+1} 1}\circ \cal T^{n-i+1}\big|^{\kappa p}\Big]\\
\le &C\sigma_n^{\kappa p}.
\end{align*}
So
\[
\E\Big[\max_{1\le j\le n}\big|\frac{\alpha_j^2}{\alpha_n^2}-\frac{V_{n,j}}{V_{n,n}}\big|^{\kappa p}\Big]\le C\sigma_n^{-\kappa p}.
\]
Also, by \eqref{TkVk} and the assumption $|T_n-1|\le 1/2$,
\[
\E\Big[\max_{1\le j\le n}\big|\frac{T_j}{T_n}-\frac{V_{n,j}}{V_{n,n}}\big|^{\kappa p}\Big]\le C\E\Big[\max_{1\le j\le n}\big|T_j-V_{n,j}\big|^{\kappa p}\Big]\le C\sigma_n^{-\kappa p}.
\]
Hence, by Chebyshev's inequality, we have for any $\kappa\ge 1$,
\begin{align}\label{Talp}
&\nonumber m(\max_{1\le j\le n}\big|\frac{T_j}{T_n}-\frac{\alpha_j^2}{\alpha_n^2}\big|>\eps)\le \frac{\E\big[\max_{1\le j\le n}\big|\frac{T_j}{T_n}-\frac{\alpha_j^2}{\alpha_n^2}\big|^{\kappa p}\big]}{\eps^{\kappa p}}\\\nonumber
\le&\frac{2^{\kappa p-1}(\E\big[\max_{1\le j\le n}\big|\frac{T_j}{T_n}-\frac{V_{n,j}}{V_{n,n}}\big|^{\kappa p}\big]+\E\big[\max_{1\le j\le n}\big|\frac{\alpha_j^2}{\alpha_n^2}-\frac{V_{n,j}}{V_{n,n}}\big|^{\kappa p}\big])}{\eps^{\kappa p}}\\
\le &C \eps^{-\kappa p}\sigma_n^{-\kappa p}.
\end{align}
Similarly,
\begin{align}\label{alpjn}
m(\max_{1\le j\le n}\big|\frac{\alpha_{j}^2}{\alpha_n^2}-\frac{\alpha_{j-1}^2}{\alpha_n^2}\big|>\eps)
\le C \eps^{-\kappa p}\sigma_n^{-2\kappa p}.
\end{align}
Note that by Remark~\ref{mar}, $\Big\|\sup_{t\in[0,1]}|\widetilde M_{n}(t)|\Big\|_{2p}<\infty$ and $\Big\|\sup_{t\in[0,1]}|Y_n(t)|\Big\|_{2p}<\infty$.
Then by the H\"older inequality, \eqref{Talp} and \eqref{alpjn}, we have
\begin{align*}
I_1&\le \Big(m(\max_{1\le j\le n}\big|\frac{T_j}{T_n}-\frac{\alpha_j^2}{\alpha_n^2}\big|>\eps)+m(\max_{1\le j\le n}\big|\frac{\alpha_{j}^2}{\alpha_n^2}-\frac{\alpha_{j-1}^2}{\alpha_n^2}\big|>\eps)\Big)^{\frac{1}{2p}}\\
&\quad\quad\times\Big(\Big\|\sup_{t\in[0,1]}|\widetilde M_{n}(t)|\Big\|_{2p}
+\Big\|\sup_{t\in[0,1]}|Y_n(t)|\Big\|_{2p}\Big)\\
&\le C\eps^{-\frac{\kappa}{2}}\sigma_n^{-\frac{\kappa}{2}}.
\end{align*}

As for the term $I_2$, by the relation $\sum_{j=1}^{i}\frac{1}{\sigma_n}\psi_{n-j}\circ\cal T^{n-j}=B(T_i)$ and
$\frac{1}{2}\le T_n\le \frac{3}{2}$, we have
\begin{align*}
I_2&=\Big\|\max_{1\le l\le n}\sup_{\frac{\alpha_{l}^2}{\alpha_n^2}\le t<\frac{\alpha_{l+1}^2}{\alpha_n^2}}|\widetilde M_n(t)-Y_n(t)|1_{E_n}\Big\|_{p}\\
&\le \Big\|\max_{1\le l\le n}\sup_{\frac{\alpha_{l}^2}{\alpha_n^2}\le t<\frac{\alpha_{l+1}^2}{\alpha_n^2}}| B(T_l)-B(tT_n)|1_{E_n}\Big\|_{p}+O(\sigma_n^{-1})\\
&\le \Big\|\sup_{|u-v|<3\eps}| B(u)-B(v)\Big\|_{p}+O(\sigma_n^{-1})\\
&\le C\eps^{1/2}+C\sigma_n^{-1}.
\end{align*}
Hence
\[
\Big\|\sup_{t\in[0,1]}|\widetilde M_n(t)-Y_n(t)|\Big\|_{p}\le C\eps^{-\frac{\kappa}{2}}\sigma_n^{-\frac{\kappa}{2}}+C\eps^{1/2}+C\sigma_n^{-1}.
\]
Taking $\eps=\sigma_n^{-\frac{\kappa}{1+\kappa}}$, we obtain that
\[
\Big\|\sup_{t\in[0,1]}|\widetilde M_n(t)-Y_n(t)|\Big\|_{p}\le C\sigma_n^{-\frac{\kappa}{2(1+\kappa)}}.
\]
Since $\kappa$ can be large enough, for any $\delta>0$, there exists $\kappa\ge 1$ such that
\[
\Big\|\sup_{t\in[0,1]}|\widetilde M_n(t)-Y_n(t)|\Big\|_{p}\le C\sigma_n^{-\frac{1}{2}+\delta}.
\]
By the same arguments, we can also obtain that
\[
\Big\|\sup_{t\in[0,1]}|X_n(t)-Y_n(t)|\Big\|_{p}\le C\sigma_n^{-\frac{1}{2}+\delta}.
\]
So
\[
\Big\|\sup_{t\in[0,1]}|\widetilde M_n(t)-X_n(t)|\Big\|_{p}\le C\sigma_n^{-\frac{1}{2}+\delta}.
\]

2. We now estimate $\big\|\sup_{t\in[0,1]}|g\circ M_{n}(t)-\widetilde M_n(t)|\big\|_{\infty}$. Note that
\begin{align*}
&g\circ M_{n}(t)=M_n(1)-M_n(1-t)\\
&=\frac{1}{\sigma_n}\sum_{i=0}^{n-1}\psi_{i}\circ\cal T^{i}-\frac{1}{\sigma_n}\sum_{i=0}^{r_n(1-t)-1}\psi_{i}\circ\cal T^{i}+F_n(t)\\
&=\frac{1}{\sigma_n}\sum_{i=r_n(1-t)}^{n-1}\psi_{i}\circ\cal T^{i}+F_n(t)\\
&=\frac{1}{\sigma_n}\sum_{i=1}^{n-r_n(1-t)}\psi_{n-i}\circ\cal T^{n-i}+F_n(t),
\end{align*}
where $\|F_n(t)\|_{\infty}\le \sigma_n^{-1}\max_{0\le i\le n-1}\|\psi_i\|_{\infty}\le C\sigma_n^{-1}$.

To compare $n-r_n(1-t)$ with $l_n(t)$, we first find that
\[\sigma_{r_n(1-t)-1}^2<(1-t)\sigma_n^2\le\sigma_{r_n(1-t)}^2.\]
Since $\sigma_n^2=\alpha_n^2$, we have
\[\alpha_n^2-\alpha_{n-r_n(1-t)+1}^2<(1-t)\alpha_n^2\le \alpha_n^2-\alpha_{n-r_n(1-t)}^2,\]
i.e.
\[\alpha_{n-r_n(1-t)}^2\le t\alpha_n^2<\alpha_{n-r_n(1-t)+1}^2.\]
By the definition of $l_n(t)$, we also have
$\alpha_{l_n(t)}^2\le t\alpha_n^2<\alpha_{l_n(t)+1}^2$. So $l_n(t)=n-r_n(1-t)$.
Hence
\[
\Big\|\sup_{t\in[0,1]}|g\circ M_{n}(t)-\widetilde M_n(t)|\Big\|_{\infty}\le C\frac{1}{\sigma_n}\max_{0\le i\le n-1 }\big\|\psi_i\circ \cal T^i\big\|_{\infty}\le C\sigma_n^{-1}.
\]

3. Combining the above estimates, by the definition of Wasserstein distance, we obtain that for all $n\ge1$,
\begin{align*}
\mathcal{W}_{p}(g\circ M_{n}, X_{n})&\le \mathcal{W}_{p}(g\circ M_{n}, \widetilde M_n)+\mathcal{W}_{p}(\widetilde M_n, X_{n})\\
&\le C\sigma_n^{-1}+C\sigma_n^{-\frac{1}{2}+\delta}\le C\sigma_n^{-\frac{1}{2}+\delta}
\end{align*}
with $\delta$ sufficiently small.
\end{proof}

\begin{proof}[Proof of Theorem \ref{thnon}]
Recall that $g:C[0,1]\rightarrow C[0,1]$ is a continuous transformation defined by $g(u)(t)=u(1)-u(1-t)$. We note that $g\circ g= Id$ and $g$ is Lipschitz with Lip $g$ $\le 2$. It follows from the Lipschitz mapping theorem (see \cite[Proposition~2.4]{Liu23}) that
\[
\mathcal{W}_{p}(M_{n},B)= \mathcal{W}_{p}(g(g\circ M_{n}),g(g\circ B))\le 2\mathcal{W}_{p}(g\circ M_{n},g\circ B).
\]
Since $g(B)=_d B$, by Lemmas \ref{xnw} and \ref{wnxn}, for $p\ge 2$ we have
\begin{align*}
&\mathcal{W}_{p}(g\circ M_{n},g\circ B)\le\mathcal{W}_{p}(g\circ M_{n}, X_n)+\mathcal{W}_{p}(X_n, B)\\
&\le C\sigma_n^{-\frac{1}{2}+\delta}+C\sigma_n^{-\frac{1}{2}+\delta}\le C\sigma_n^{-\frac{1}{2}+\delta}\asymp C\Sigma_n^{-\frac{1}{2}+\delta}
\end{align*}
with $\delta$ sufficiently small and $n\ge1$.

Finally, by Lemma~\ref{wmn}, we conclude that
\begin{align*}
\mathcal{W}_{p}(W_{n},B)&\le \mathcal{W}_{p}(W_{n},M_n)+\mathcal{W}_{p}(M_{n},B)\\
&\le C\Sigma_n^{-\frac{1}{2}+\delta}+C\Sigma_n^{-\frac{1}{2}+\delta}\le C\Sigma_n^{-\frac{1}{2}+\delta}
\end{align*}
with $\delta$ sufficiently small and $n\ge1$.
\end{proof}

\section{Applications of Theorem~\ref{thnon}}
In this section, we introduce a class of systems investigated in~\cite{Haydn17} as concrete examples to which the Wasserstein convergence
rate in the invariance principle (Theorem~\ref{thnon}) applies. In order to guarantee the conditions in Theorem~\ref{thnon}, a few assumptions are needed. For the convenience, we recall the assumptions first and then provide a list of examples. We refer to~\cite{Conze07} and \cite[Section~7]{Haydn17} for more details.

We say that a transfer operator $P$ is exact if $\lim_{n\to \infty}\|P^nv\|_1=0$, $\forall v\in \cal V$ with mean zero (w.r.t. Lebesgue measure). We define a distance
between two transfer operators $P$ and $Q$ by taking
\[
d(P,Q)=\sup_{v\in \cal V, \|v\|_{\alpha}\le 1}\|Pv-Qv\|_1.
\]
In the following, the maps we consider in $\mathcal F$ will be close to a given map $T_0$. Roughly speaking, the word ``close" means that $d(P_n, P_0)\to 0$, as $n\to \infty$. We will give a detailed description below.

One of the basic assumptions is a ``quasi-compactness" condition:\\
{\bf Uniform Doeblin-Fortet-Lasota-Yorke inequality (DFLY).} Given the family $\mathcal F$, there exist constants $A,B<\infty$, $\gamma\in(0,1)$ such that for any $n\in \N$, any sequence of operators $P_1, P_2,\ldots, P_n$ corresponding to maps chosen from $\mathcal F$ and
any $v\in \cal V$, we have
\begin{align}\label{dfly}
\|P_n\circ P_{n-1}\circ\cdots\circ P_1v\|_\alpha\le A\gamma^n\|v\|_\alpha+B\|v\|_1.
\end{align}
In particular, the bound~\eqref{dfly} is valid when it is applied to $P_0^n$. Namely, we require:\\
{\bf Exactness property (Exa).} The operator $P_0$ has a spectral gap, which implies that there exist constants $C<\infty$, $\gamma_0\in(0,1)$ such that for any $n\ge 1$ and $v\in \cal V$,
\[
\|P_0^nv\|_\alpha\le C\gamma_0^n\|v\|_\alpha.
\]

By the definition of $\|\cdot\|_\alpha$, $\|v\|_\infty\le C_1\|v\|_\alpha$. We know  that $\|P_0^nv\|_1\le C\|P_0^nv\|_\alpha\to 0$. So the transfer operator $P_0$ is exact. To verify the property (DEC), a useful criterion was given in \cite[Proposition~2.10]{Conze07}. It says that if $P_0$ is exact, then there exists
$\delta_0>0$ such that the set $\{P: d(P,P_0)<\delta_0\}$ satisfies the property (DEC).\\
{\bf Lipschitz continuity property (Lip).} Assume that the maps (and their transfer operators) are  parametrized by a sequence of numbers $\eps_k$, $k\in\N$, such that $\lim_{k\to\infty}\eps_k=\eps_0$ $(P_{\eps_0}=P_0)$. We assume that there exists a constant $C_1<\infty$ such that
\[
d(P_{\eps_k}, P_{\eps_j})\le C_1|\eps_k-\eps_j|, \quad \hbox{for~ all~} k,j\ge 0.
\]
In the following, the maps we consider are restricted to a  subclass of maps; that is $\{T_{\eps_k}: |\eps_k-\eps_0|< C_1^{-1}\delta_0\}$. Then the maps in this subclass satisfy the (DEC) condition. Besides, we also need a quantitative assumption:\\
{\bf Convergence property (Conv).}  There exist constants $C_2<\infty$, $\kappa>0$ such that
\[
|\eps_n-\eps_0|\le C_2\frac{1}{n^\kappa} \quad\forall n\ge 1.
\]

Finally, we also require:\\
{\bf Positivity property (Pos).} The density $h$ for the limiting map $T_0$ is strictly positive. Namely,
\[
\inf_x h(x)>0.
\]

The above properties can be summarized to obtain the following result.
\begin{lem}\cite[Lemma~7.1]{Haydn17}
Assume the assumptions (Exa), (Lip), (Conv) and (Pos) are satisfied. If $v$ is not a coboundary for $T_0$, then $\Sigma_n^2/n$ converges as
$n\to\infty$ to $\Sigma^2$ which is given by
\[
\Sigma^2=\int \hat P[Gv-\hat PGv]^2(x)h(x)\rmd x,
\]
where $\hat Pv=\frac{P_0(hv)}{h}$ is the normalized transfer operator of $T_0$ and $Gv=\sum_{k\ge 0}\frac{P_0^k(hv)}{h}$.
\end{lem}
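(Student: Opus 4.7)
The plan is to reduce $\Sigma_n^2/n\to\Sigma^2$ to a Cesàro limit of the termwise variances of the reverse martingale differences $\psi_i\circ\cal T^i$ produced in Section~3, and then to let the sequential data drift into its stationary limit by using that every $P_n$ is Lipschitz-close to the spectrally gapped $P_0$. In the setting of the lemma the basic properties (DEC), (MIN), (SUP) are consequences of (Exa), (Lip), (Conv) and (Pos), as recalled in the paragraph preceding the lemma, so the machinery of Section~3 is available; in particular Corollary~\ref{var} gives $\Sigma_n^2=\sigma_n^2+O(\sigma_n)$, and it thus suffices to show $\sigma_n^2/n\to\Sigma^2$ with $\Sigma^2>0$. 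Since $\sigma_n^2=\sum_{i=0}^{n-1}\int\psi_i^2\cdot\cal P^i 1\,\rmd m$, the Cesàro theorem reduces the task to the termwise convergence
\[
a_i:=\int\psi_i^2\cdot\cal P^i 1\,\rmd m\longrightarrow\Sigma^2\quad(i\to\infty),
\]
together with the identification of $\Sigma^2$ in the stated closed form.

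\emph{Stationary limit in $\cal V$.} Write $\hat Pf:=P_0(hf)/h$ for the normalized transfer operator of $T_0$; it has a spectral gap on $\cal V$ by (Exa), and (Pos) gives $\inf h>0$. I would prove the two $\cal V$-convergences
\[
\cal P^i 1\longrightarrow h,\qquad \psi_i\longrightarrow \psi^*:=Gv-(\hat P Gv)\circ T_0.
\]
The first is obtained by telescoping
\[
\cal P^i 1-P_0^i 1=\sum_{k=1}^{i}P_i\cdots P_{k+1}(P_k-P_0)P_0^{k-1}1
\]
and bounding each term via the uniform (DFLY) inequality together with $d(P_k,P_0)=O(k^{-\kappa})$ (from (Lip)+(Conv)) and the spectral-gap decay of $P_0^{k-1}1$ towards $h$. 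An analogous telescoping of the defining expression for $h_n$ in Section~3, after factoring out $1/\cal P^n 1$ (controlled uniformly by (MIN)), gives $h_n\to\hat P Gv$ in $\cal V$; since $\bar v_n=v-\int v\circ\cal T^n\rmd m\to v-\int v\,\rmd\mu$ this yields $\psi_n=\bar v_n+h_n-h_{n+1}\circ T_{n+1}\to\psi^*$.

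\emph{Limit formula and positivity.} Property (SUP) bounds $\cal P^i 1$ uniformly in $L^\infty$, and $\psi_i$ is uniformly bounded, so the previous step combined with dominated convergence gives $a_i\to\int(\psi^*)^2 h\,\rmd m=\int(\psi^*)^2\rmd\mu$. A short computation, using $\hat P\psi^*=\hat P Gv-\hat P Gv=0$ (so $\psi^*$ is a $T_0$-reverse martingale difference in $L^2(\mu)$) and the duality $\int f\cdot g\circ T_0\,\rmd\mu=\int\hat Pf\cdot g\,\rmd\mu$, recasts $\int(\psi^*)^2\rmd\mu$ as $\int\hat P\bigl[(Gv-\hat P Gv)^2\bigr]h\,\rmd m$, giving the announced formula. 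Finally $\Sigma^2>0$ is equivalent to $\psi^*\not\equiv 0$, which is the standard Gordin-type statement that $v$ fails to be an $L^2(\mu)$ coboundary for $T_0$; this is precisely the hypothesis.

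\emph{Main obstacle.} The delicate point is quantifying $\cal P^i 1\to h$: one has to balance the exponential contraction $\gamma^{i-k}$ afforded by (DFLY)/(Exa) against the only polynomial smallness $d(P_k,P_0)=O(k^{-\kappa})$ of the perturbations, typically by splitting the telescoped sum at $k\sim(\log i)/|\log\gamma|$. Transporting this estimate through the non-linear factor $1/\cal P^n 1$ and the $L^\infty$-factors $\bar v_{n-k}\cal P^{n-k}1$ appearing in $h_n$ requires comparable care, and is where all of (DFLY), (Exa), (Lip), (Conv), (Pos) are used together.
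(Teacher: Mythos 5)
The paper itself offers no proof of this lemma: it is quoted verbatim from \cite[Lemma~7.1]{Haydn17}, so the benchmark is the standard perturbative argument, and your overall plan (reduce $\Sigma_n^2/n$ to the Ces\`aro limit of $a_i=\int\psi_i^2\,\cal P^i1\,\rmd m$ via Proposition~\ref{SD}/Corollary~\ref{var}, then let $\cal P^i1\to h$ and $\psi_i\to\psi^*=Gv-(\hat PGv)\circ T_0$ by telescoping against $P_0$) is the natural route. However, there are concrete gaps. You assert that (DEC), (MIN), (SUP) are consequences of (Exa), (Lip), (Conv), (Pos) ``as recalled in the paragraph preceding the lemma''; the paper only derives (DEC) there (Conze--Raugi's exactness criterion plus the restriction $d(P,P_0)<\delta_0$), and (SUP) comes from the standing (DFLY). (MIN) does \emph{not} follow from these hypotheses: the distance $d$ controls only $L^1$-norms, so the telescoping argument yields $\cal P^i1\to h$ in $L^1$ but not uniformly, and no pointwise lower bound can be extracted this way; this is precisely why the paper verifies (MIN) separately in each application (Conze--Raugi for $\beta$-transformations, the covering property in the last subsection). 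Since your route leans on the Section~3 machinery --- the uniform bounds on $h_n$, the definition of $\psi_i$, Corollary~\ref{var} --- all of which use (MIN), you must either add (MIN) as a hypothesis or argue directly with correlation sums (Green--Kubo), which needs only (DEC), (DFLY) and $L^1$-convergence of $\cal P^i1$. Relatedly, your claimed $\cal V$-convergences are stronger than what (Lip) can deliver ($L^1$-convergence plus the uniform sup-norm bounds is what you get, and it suffices for $a_i$), and your proposed cut of the telescoped sum at $k\sim(\log i)/|\log\gamma|$ is at the wrong end: the tail $\sum_{k>\log i}k^{-\kappa}$ does not vanish when $\kappa\le1$, while (Conv) only guarantees $\kappa>0$; one must cut at $i-k\sim C\log i$, using the contraction $\gamma^{i-k}$ for all earlier indices and the polynomial smallness only for the last $O(\log i)$ terms.

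The identification step is also wrong as written. With the paper's definition $Gv=\sum_{k\ge0}\hat P^kv$ one has $Gv-\hat PGv=v$, so $\int\hat P[(Gv-\hat PGv)^2]\,h\,\rmd m=\int v^2h\,\rmd m$, which is not $\E_\mu[(\psi^*)^2]$ in general (here $\mu$ is the a.c.i.m.\ $h\,\rmd m$ of $T_0$); no computation can recast $\E_\mu[(\psi^*)^2]$ into that literal expression. Your $\psi^*$ is the correct reverse martingale difference ($\hat P\psi^*=0$), and the correct identity, obtained from $\hat P[f\cdot g\circ T_0]=g\,\hat Pf$, is
\[
\E_\mu[(\psi^*)^2]=\int\hat P\big[(Gv-(\hat PGv)\circ T_0)^2\big]\,h\,\rmd m=\int\big(\hat P[(Gv)^2]-(\hat PGv)^2\big)\,h\,\rmd m,
\]
so the displayed formula in the lemma must be read with the composition with $T_0$ inside the square (or in the equivalent second form); you reproduced the statement's ambiguous notation rather than noticing that, taken literally, it collapses to $\mu(v^2)$. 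With (MIN) granted, the cut repaired, and the identification stated in the corrected form, your argument does give the lemma.
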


\subsection{$\beta$-transformations}
Let $\beta>1$ and denote by $T_\beta(x)=\beta x\mod 1$ the $\beta$-transformation on the unit interval $M=[0,1]$. Let $c>0$ and $\beta_k$ be real number such
that $\beta_k\ge 1+c$, $k\ge 1$. Then $\{T_{\beta_k}: k\ge1\}$ is the family of maps we want to consider here. We take the functional space $\cal V$ to be the Banach space of bounded variation functions with norm $\|\cdot\|_{BV}$. The property (DEC) and (MIN) were proved in \cite[Theorem~3.4(c)]{Conze07} and  \cite[Proposition~4.3]{Conze07}, respectively.
The invariant density $h$ of $T_\beta$ is bounded below, and the continuity (Lip) was introduced in \cite[Lemma~3.9]{Conze07}. Then by Theorem~\ref{thnon}, we obtain
\begin{thm}
Assume that $|\beta_n-\beta|\le n^{-\theta}$, $\theta>1/2$. Let $v\in BV$ be such that $m(hv)=0$, where $m$ is the Lebesgue measure and $v$ is not a coboundary for $T_\beta$. Let $W_n$ be defined by \eqref{wnt} and $B$ a standard Brownian motion. Then for any $\delta>0$, there exists a constant $C>0$ such that
$\mathcal{W}_{p}(W_{n},B)\leq C \Sigma_n^{-\frac{1}{2}+\delta}$ for all $n\ge 1$ and $p\ge 2$.
\end{thm}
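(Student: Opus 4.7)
The plan is to reduce the claim to a direct application of Theorem~\ref{thnon}, so the task is primarily a verification of the three structural properties (DEC), (MIN) and (SUP) for the family $\{T_{\beta_k}\}$ in the bounded variation space $\cal V = BV$, together with confirmation that the variance $\Sigma_n^2$ actually grows fast enough to make the rate $\Sigma_n^{-1/2+\delta}$ informative.

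For the three structural properties, everything is classical. Property (DEC) follows from the uniform Doeblin-Fortet-Lasota-Yorke (DFLY) inequality for $\beta$-transformations with $\beta_k \ge 1+c$ combined with exactness of the limit operator $P_\beta$; this is precisely \cite[Theorem~3.4(c)]{Conze07}. Property (MIN) is \cite[Proposition~4.3]{Conze07}, using the uniform lower bound on the equilibrium densities for this family. Property (SUP) is a direct consequence of the uniform DFLY bound $\|P_n\circ \cdots\circ P_1 1\|_{BV}\le A\gamma^n\|1\|_{BV}+B\|1\|_1\le A+B$ together with $\|\cdot\|_\infty \le \|\cdot\|_{BV}$. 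The hypothesis $\sup_n \|v_n\|_\alpha < \infty$ is trivial because $v_n \equiv v \in BV$.

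It then remains to show that $\Sigma_n\to\infty$. The hypothesis $|\beta_n - \beta| \le n^{-\theta}$ with $\theta > 1/2$ is exactly the quantitative (Conv) condition; positivity of $h$ gives (Pos), exactness of $P_\beta$ gives (Exa), and (Lip) is \cite[Lemma~3.9]{Conze07}. Since $v$ is not a coboundary for $T_\beta$, \cite[Lemma~7.1]{Haydn17} then yields $\Sigma_n^2/n \to \Sigma^2 > 0$, so $\Sigma_n \asymp n^{1/2}$. Applying Theorem~\ref{thnon} to this family gives $\mathcal{W}_p(W_n, B) \le C\Sigma_n^{-1/2+\delta}$ for every $p \ge 2$ and $\delta > 0$, which is the claim. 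The main (and essentially only) obstacle is a careful bookkeeping of which cited result supplies which hypothesis; no genuinely new analytic ingredient enters at this step.
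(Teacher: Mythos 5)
Your proposal is correct and follows essentially the same route as the paper: verify (DEC), (MIN) and (SUP) for the family $\{T_{\beta_k}\}$ in $BV$ via \cite{Conze07} (with (SUP) from the uniform DFLY bound applied to $1$), use (Exa), (Lip), (Conv), (Pos) and the non-coboundary assumption with \cite[Lemma~7.1]{Haydn17} to control the variance, and then apply Theorem~\ref{thnon}. Nothing essential is missing.
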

\subsection{Piecewise expanding map on the interval}
Let $T$ be a piecewise uniformly expanding map on the unit interval $M=[0,1]$. We assume that $T$ is locally injective on the open intervals $A_k$, $k=1,\ldots,m$, that give a partition $\cal A=\{A_k: k\}$ of the unit interval (up to zero measure sets). The map $T$ is $C^2$ on each $A_k$ and has a $C^2$ extension to the boundaries. Moreover, there exist $\Lambda>1$, $C<\infty$ such that $\inf_{x\in M}|DT(x)|\ge \Lambda$ and $\sup_{x\in M} \left|\frac{D^2T(x)}{DT(x)}\right|\le C$.

The family of maps we consider here are constructed with local additive noise starting from $T$. On each interval $A_k$, we define $T_\eps=T(x)+\eps$, where $|\eps|<1$ and we restrict the values of $\eps$ such that the images $T_\eps A_k$, $k=1,\ldots,m$ are strictly included in $[0,1]$. We also suppose that there exists an element $A_\omega\in \cal A$ such that\\
(i) $A_\omega\subset T_\eps A_k$ for all $T_\eps$ and $k=1,\ldots,m$; \\
(ii) The map $T$ sends $A_\omega$ to the whole unit interval. In particular, there exists $1>L'>0$ such that for all $T_\eps$ and $k=1,\ldots,m$, $|T_\eps(A_\omega)\cap A_k|>L'$.

We take the functional space $\cal V$ to be the Banach space of bounded variation functions with norm $\|\cdot\|_{BV}$. It follows from \cite[Lemma~7.5]{Haydn17} that the maps $T_\eps$ satisfy the conditions (DFLY), (MIN), (Pos) and (Lip). Hence the variance $\Sigma_n^2$ grows linearly and the standard ASIP holds with variance $\Sigma^2$ by \cite[Theorem~7.6]{Haydn17}. Further, by Theorem~\ref{thnon}, we obtain
\begin{thm}\label{c2}
Let $T$ be a map of the unit interval defined above and such that it has only one absolutely continuous invariant measure, which is also mixing. Assume that $\{T_{\eps_k}\}$ is the sequence of maps, where the sequence $\{\eps_k\}_{k\ge 1}$ satisfies $|\eps_k|\le k^{-\theta}$, $\theta>1/2$. If $v\in BV$ is not a coboundary for $T$, then for any $\delta>0$, there exists a constant $C>0$ such that
$\mathcal{W}_{p}(W_{n},B)\leq C \Sigma_n^{-\frac{1}{2}+\delta}$ for all $n\ge 1$ and $p\ge 2$.
\end{thm}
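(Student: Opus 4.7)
The plan is to derive Theorem~\ref{c2} as a direct application of Theorem~\ref{thnon}, so the task reduces to checking that the sequence $\{T_{\eps_k}\}_{k\ge 1}$ satisfies the three structural hypotheses (DEC), (MIN) and (SUP) on $\cal V = BV$, and that the constant sequence $v_k := v \in BV$ meets the regularity requirement $\sup_k \|v_k\|_{BV} < \infty$, which is immediate. Once those are in place, the conclusion $\mathcal{W}_p(W_n, B) \le C\Sigma_n^{-1/2+\delta}$ for $p\ge 2$ and $n\ge 1$ follows verbatim from Theorem~\ref{thnon}.

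First, I would invoke \cite[Lemma~7.5]{Haydn17}, which asserts that the family $\{T_\eps\}$ satisfies (DFLY), (MIN), (Pos), and (Lip). Property (MIN) is thus directly inherited. For (SUP), applying (DFLY) with $v \equiv 1$ yields $\|\cal P^n 1\|_{BV} \le A\gamma^n \|1\|_{BV} + B\|1\|_1 \le A+B$, so $\|\cal P^n 1\|_\infty \le C\|\cal P^n 1\|_{BV}$ is uniformly bounded. For (DEC), the hypothesis that the absolutely continuous invariant measure of $T = T_0$ is mixing, together with (DFLY) for $P_0$, produces a spectral gap for $P_0$ on $BV$, i.e.\ the exactness property (Exa). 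Then (Lip) gives $d(P_{\eps_k}, P_0) \le C_1 |\eps_k| \le C_1 k^{-\theta}$, so $P_{\eps_k}$ eventually lies in the neighborhood $\{P: d(P, P_0) < \delta_0\}$ on which \cite[Proposition~2.10]{Conze07} delivers (DEC).

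After these verifications, Theorem~\ref{thnon} immediately yields the desired Wasserstein bound. As a sanity check on the content of the bound, the hypothesis $|\eps_k| \le k^{-\theta}$ with $\theta > 1/2$ is exactly the summability required by \cite[Lemma~7.1]{Haydn17} to obtain $\Sigma_n^2/n \to \Sigma^2$; the non-coboundary condition on $v$ with respect to $T_0$ forces $\Sigma^2 > 0$, so $\Sigma_n \asymp \sqrt n$ and the abstract rate $\Sigma_n^{-1/2+\delta}$ translates to the concrete rate $O(n^{-1/4+\delta})$.

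The main obstacle I anticipate is not the plug-in step itself but the handling of the initial indices $k$ for which $|\eps_k|$ may exceed $C_1^{-1}\delta_0$, because (DEC) must hold starting from $n=1$ and for every finite sequence $P_1,\ldots,P_n$ drawn from the family. A clean remedy is to split $\cal P^n = (P_n \circ \cdots \circ P_{k_0+1}) \circ (P_{k_0} \circ \cdots \circ P_1)$, where $k_0$ is chosen so that every $P_k$ with $k>k_0$ lies in the good neighborhood; (DFLY) gives a uniform bound on the finite initial factor, and the Conze--Raugi criterion provides exponential decay for the tail factor on the mean-zero subspace. This introduces at most a multiplicative constant in the (DEC) inequality, which is absorbed in the $O(\cdot)$ constants of Sections~3--4 and does not affect the final rate.
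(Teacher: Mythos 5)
Your proposal is correct and follows essentially the same route as the paper: the paper likewise deduces (DFLY), (MIN), (Pos), (Lip) from \cite[Lemma~7.5]{Haydn17}, obtains (DEC) via exactness of $P_0$ together with the Conze--Raugi criterion \cite[Proposition~2.10]{Conze07} for maps close to $T_0$ (with (SUP) implicit from (DFLY) applied to $1$), and then applies Theorem~\ref{thnon}. The only cosmetic difference is your splitting argument for the finitely many initial indices with $|\eps_k|$ too large, which the paper avoids by restricting the family to maps sufficiently close to $T_0$.
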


\begin{rem}
We can also consider multidimensional piecewise expanding maps investigated in \cite{AFLV11,AFV15,HNVZ13,S00}. In this case, we take the functional space $\cal V$ to be the space of quasi-H\"older functions. Then Theorem~\ref{c2} also holds. We refer to Section~7.3.2 in \cite{Haydn17} for more details.
\end{rem}

\subsection{Covering maps: A general class}
We now present a more general class of examples which were introduced in \cite{BV13}. As before the maps we consider here will be constructed around a given map $T:M\to M$ with $M=[0,1]$. We take the functional space $\cal V$ to be the Banach space of bounded variation functions with norm $\|\cdot\|_{BV}$. Now we introduce such a initial map $T$.

{\bf(H1)} There exists a partition $\cal A=\{A_i\}_{i=1}^{m}$ of $M$, which consists of pairwise disjoint intervals $A_i$. Let
$\bar A_i:=[c_{i,0},c_{i+1,0}]$ and there exists $\delta>0$ such that $T_{i,0}:=T|_{(c_{i,0},c_{i+1,0})}$ is $C^2$ and extends to a $C^2$ function
$\bar T_{i,0}$ on a neighbourhood $[c_{i,0}-\delta,c_{i+1,0}+\delta]$ of $\bar A_i$.

{\bf(H2)} There exists $\beta_0<\frac{1}{2}$ such that $\inf_{x\in I\setminus \cal C_0}|T'(x)|\ge \beta_0^{-1}$, where $\cal C_0=\{c_{i,0}\}_{i=1}^{m}$.

Next, we construct the perturbed map $T_\eps$ in the following way. Each map $T_\eps$ has a partition $\{A_{i,\eps}\}_{i=1}^{m}$ of $M$, which consists of pairwise disjoint intervals $A_{i,\eps}$,
$\bar A_{i,\eps}:=[c_{i,\eps},c_{i+1,\eps}]$ such that

(i) for each $i$ we have $[c_{i,0}+\delta,c_{i+1,0}-\delta]\subset [c_{i,\eps},c_{i+1,\eps}]\subset[c_{i,0}-\delta,c_{i+1,0}+\delta]$; whenever
$c_{1,0}=0$ or $c_{m+1,0}=1$, we do not move them with $\delta$. In this way we establish a one-to-one correspondence between the unperturbed and the perturbed boundary points of $A_i$ and $A_{i,\eps}$. (The quantity $\delta$ is from the assumption (H1) above.)

(ii) The map $T_\eps$ is locally injective over the closed intervals $\bar A_{i,\eps}$, of class $C^2$ in their interiors, and expanding with
$\inf_{x}|T'_\eps(x)|> 2$. Moreover, if $c_{i,0}$ and $c_{i,\eps}$ are two (left or right) corresponding points, we assume that there exists $\sigma>0$ such that $\forall \eps>0$, $\forall i=1,\ldots,m$ and
$\forall x\in [c_{i,0}-\delta,c_{i+1,0}+\delta]\cap\bar A_{i,\eps}$, we have
\begin{align}\label{c}
|c_{i,0}-c_{i,\eps}|\le \sigma
\end{align}
and
\begin{align}\label{T12}
|\bar T_{i,0}(x)-T_{i,\eps}(x)|\le \sigma.
\end{align}

We note that the assumption (H2), more precisely the fact that $\beta_0^{-1}$ is strictly bigger than $2$ instead of $1$, is sufficient to get the uniform Doeblin-Fortet-Lasota-Yorke inequality
(DFLY), as explained in Section~4.2 of \cite{GHW11}. In order to deal with the lower bound condition (MIN), we need to require the following condition. We refer to \cite[Section~2.6]{AR16} or \cite[Section~7.4]{Haydn17} for more details.\\
{\bf Covering property.} There exist $n_0$ and $N(n_0)$ such that:\\
(i) the partition into sets $A_{k_1,\ldots,k_{n_0}}^{\eps_1,\cdots,\eps_{n_0}}$ has diameter less than $\frac{1}{2au}$, where we use the notation
$A_{i,\eps_k}$ to denote the $i$ domain of injectivity of the map $T_{\eps_k}$, and
\[
A_{k_1,\ldots,k_{n}}^{\eps_1,\cdots,\eps_{n}}:=T_{k_1,\eps_1}^{-1}\circ\cdots\circ T_{k_{n-1},\eps_{n-1}}^{-1}A_{k_n,\eps_n}\cap\cdots\cap
T_{k_1,\eps_1}^{-1}A_{k_2,\eps_2}\cap A_{k_1,\eps_1}.
\]
(ii) For any sequence $\eps_1,\cdots,\eps_{N(n_0)}$ and $k_1,\ldots,k_{n_0}$ we have
\[
T_{\eps_{N(n_0)}}\circ \cdots \circ T_{\eps_{n_0}+1}A_{k_1,\ldots,k_{n_0}}^{\eps_1,\cdots,\eps_{n_0}}=M.
\]
Meanwhile, the (Pos) condition also follows from the above covering condition. As for the continuity (Lip), we can extend the continuity for the expanding maps of the intervals to the general case if we can get the following bounds:
\begin{align}\label{TDT}
\left.
\begin{matrix}
|T_{\eps_1}^{-1}(x)-T_{\eps_2}^{-1}(x)|\\
|DT_{\eps_1}(x)-DT_{\eps_2}(x)|\\
\end{matrix}
\right\}=O(|\eps_1-\eps_2|),
\end{align}
where the point $x$ is in the same domain of injective of the maps $T_{\eps_1}$ and $T_{\eps_2}$, the comparison of the same functions and
derivative in two different points being controlled by the condition \eqref{c}. The bounds \eqref{TDT} follow easily by adding to \eqref{c},
\eqref{T12} the further assumptions that $\sigma=O(\eps)$ and requiring a continuity condition for derivatives like \eqref{T12} and with
$\sigma$ again being order of $\eps$.

Combining the above statements, we obtain
\begin{thm}
 Let $T:M\to M$ be a map defined above. Assume that $\{T_{\eps_k}\}$ is the sequence of maps satisfying the above conditions, and the sequence $\{\eps_k\}_{k\ge 1}$ satisfies $|\eps_k|\le k^{-\theta}$, $\theta>1/2$. If $v$ is not a coboundary for $T$, then for any $\delta>0$, there exists a constant $C>0$ such that
$\mathcal{W}_{p}(W_{n},B)\leq C \Sigma_n^{-\frac{1}{2}+\delta}$ for all $n\ge 1$ and $p\ge 2$.
\end{thm}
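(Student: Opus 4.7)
The plan is to verify that the sequence $\{T_{\eps_k}\}$ fulfils the three hypotheses (DEC), (MIN) and (SUP) of Theorem~\ref{thnon} on the Banach space $\cal V = BV$, check that the variance grows linearly so that $\Sigma_n\asymp\sqrt n$, and then invoke Theorem~\ref{thnon} directly. The structure parallels the two preceding applications; only the verification of the three operator-theoretic properties is specific to the present class.

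First I would produce a uniform Doeblin--Fortet--Lasota--Yorke inequality for every finite composition $P_{\eps_n}\circ\cdots\circ P_{\eps_1}$. Assumption (H2) with $\inf_x|T_\eps'(x)|>2$ (the strict factor $2$ rather than $1$ is crucial) together with uniform $C^2$ bounded distortion makes the classical Lasota--Yorke argument recalled in Section~4.2 of \cite{GHW11} go through with constants independent of the chosen sequence. Testing (DFLY) against $v\equiv 1$ immediately yields (SUP), since $\|\cdot\|_\infty\le C\|\cdot\|_{BV}$. The property (MIN) then follows from the Covering property stated before the theorem: after $N(n_0)$ iterates the cylinder images cover $M$, and the standard argument summarised in \cite[Section~2.6]{AR16} and \cite[Section~7.4]{Haydn17} promotes this to a uniform positive lower bound on $\mathcal P^n 1$.

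Next I would establish (DEC). The map $T_0$ is mixing with a spectral gap for $P_0$ (a consequence of (DFLY) and of $P_0$ admitting a strictly positive invariant density via the Covering property), so by \cite[Proposition~2.10]{Conze07} there exists $\delta_0>0$ such that any sequence of operators within $\delta_0$ of $P_0$ decays exponentially on mean-zero functions. The perturbation estimates \eqref{c}, \eqref{T12} and the derivative bound \eqref{TDT}, with the additional $\sigma=O(\eps)$ requirement, yield the Lipschitz continuity $d(P_{\eps_k},P_0)\le C|\eps_k|\le Ck^{-\theta}$. Hence $d(P_{\eps_k},P_0)<\delta_0$ for all sufficiently large $k$, which gives (DEC) for the tail; the finitely many exceptional indices can be absorbed into the constants. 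Because $\theta>1/2$, the property (Conv) is satisfied with $\kappa=\theta>1/2$, and the noncoboundary assumption on $v$ together with \cite[Lemma~7.1]{Haydn17} gives $\Sigma_n^2/n\to\Sigma^2>0$, so $\Sigma_n\asymp\sqrt n$. Theorem~\ref{thnon} then delivers $\mathcal W_p(W_n,B)\le C\Sigma_n^{-1/2+\delta}$ for every $p\ge 2$ and every $\delta>0$.

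The main obstacle I anticipate is the operator-continuity bound $\|P_{\eps_k}v-P_0 v\|_1\le C|\eps_k|\|v\|_{BV}$ needed to reduce (Lip) to the numerical condition on $\eps_k$. One must carefully track how the moving discontinuities $c_{i,\eps_k}$ perturb the Perron--Frobenius formula branch by branch: the symmetric-difference contribution between $A_{i,\eps_k}$ and $A_{i,0}$, together with the pointwise error $|T_{i,\eps_k}^{-1}-T_{i,0}^{-1}|$ on their common domain and the Jacobian error from \eqref{TDT}, must be combined to produce only a first-order error in $\eps_k$ when tested against a $BV$ function. Once this estimate is in place, every remaining ingredient is already available in the paper and the conclusion is immediate.
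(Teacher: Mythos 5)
Your proposal is correct and follows essentially the same route as the paper: verify (DFLY) via the expansion condition (H2) and \cite{GHW11}, get (MIN) and (Pos) from the Covering property, (SUP) from (DFLY) tested on $1$, (Lip)/(Conv) from \eqref{c}, \eqref{T12}, \eqref{TDT} with $\sigma=O(\eps)$ and $\theta>1/2$, obtain (DEC) from exactness of $P_0$ via \cite[Proposition~2.10]{Conze07}, and then apply Theorem~\ref{thnon} (with \cite[Lemma~7.1]{Haydn17} giving the linear variance growth). The only cosmetic difference is that the paper handles the closeness condition by restricting a priori to the subclass $\{T_{\eps_k}:|\eps_k-\eps_0|<C_1^{-1}\delta_0\}$, whereas you absorb the finitely many early indices into the constants, which amounts to the same thing.
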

\appendix

\section{}

\begin{thm}[Kolmogorov continuity criterion \cite{K97}]\label{kcc}
Let $X=\{X(t), t\in [0,T] \}$ be an n-dimensional stochastic process such that
\begin{align*}
\E|X(t)-X(s)|^\beta\le C|t-s|^{1+\alpha}
\end{align*}
for constants $\beta, \alpha>0$, $C\ge 0$ and  for all $0\le s,t \le T$. Then $X$ has a continuous version $\widetilde{X}$.

Further for each $0<\gamma<\frac{\alpha}{\beta}$, there exists a positive random variable $K(\omega)$ with $\E(K^{\beta})<\infty$ such that
\[
|\widetilde{X}(t,\omega)- \widetilde{X}(s,\omega)|\le K(\omega)|s-t|^{\gamma},\quad \hbox{for~every~} s,t \in [0,T]
\]
holds for almost all $\omega$.
\end{thm}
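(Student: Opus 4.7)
The plan is to prove this classical Kolmogorov criterion by the standard dyadic chaining argument. Without loss of generality take $T=1$, and let $D_n=\{k/2^n:0\le k\le 2^n\}$, $D=\bigcup_{n\ge 0}D_n$ be the dyadic rationals in $[0,1]$. Fix $\gamma\in(0,\alpha/\beta)$. The first step is to apply the hypothesis together with Markov's inequality to each increment along $D_n$: for $0\le k<2^n$,
\[
\mathbb{P}\bigl(|X((k{+}1)/2^n)-X(k/2^n)|>2^{-\gamma n}\bigr)\le C\,2^{-n(1+\alpha)}\cdot 2^{\gamma\beta n}.
\]
Setting $M_n:=\max_{0\le k<2^n}|X((k{+}1)/2^n)-X(k/2^n)|$ and taking a union bound gives
$\mathbb{P}(M_n>2^{-\gamma n})\le C\,2^{-n(\alpha-\gamma\beta)}$. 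Since $\alpha-\gamma\beta>0$ the series converges, so by Borel--Cantelli there is an event $\Omega_0$ of full measure and a random index $N(\omega)$ with $M_n(\omega)\le 2^{-\gamma n}$ for all $n\ge N(\omega)$.

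The second step is the standard chaining argument: for $\omega\in\Omega_0$ and $s,t\in D$ with $0<t-s<2^{-N(\omega)}$, pick $N$ with $2^{-(N+1)}\le t-s<2^{-N}$, expand $s$ and $t$ in nonincreasing dyadic blocks, and telescope across at most two neighbouring dyadic intervals at each level. This yields $|X(s,\omega)-X(t,\omega)|\le 2\sum_{n\ge N}M_n(\omega)\le C_\gamma\,(t-s)^{\gamma}$ for a deterministic $C_\gamma$. So $X|_D$ is uniformly $\gamma$-H\"older on $\Omega_0$; define $\widetilde{X}(t,\omega)$ as the unique continuous extension from $D$ to $[0,1]$ on $\Omega_0$ and, say, $0$ off $\Omega_0$. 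The extended process is automatically $\gamma$-H\"older with the same constant.

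To see that $\widetilde{X}$ is a version of $X$, fix $t\in[0,1]$ and pick dyadic $s_k\to t$. The hypothesis gives $\mathbb{E}|X(s_k)-X(t)|^{\beta}\to 0$, so $X(s_k)\to X(t)$ in $L^\beta$, hence (along a subsequence) a.s., while by construction $X(s_k)\to\widetilde{X}(t)$ a.s. Thus $\widetilde{X}(t)=X(t)$ almost surely. Finally, for the quantitative bound, define
\[
K(\omega):=\sup_{\substack{s,t\in[0,1]\\ s\ne t}}\frac{|\widetilde{X}(t,\omega)-\widetilde{X}(s,\omega)|}{|t-s|^{\gamma}}.
\]
By the chaining argument $K\le C_\gamma'\sum_{n\ge 0}2^{\gamma n}M_n$, so
\[
\mathbb{E}(K^{\beta})\le C\sum_{n\ge 0}2^{\gamma\beta n}\,\mathbb{E}(M_n^{\beta})\le C\sum_{n\ge 0}2^{\gamma\beta n}\cdot 2^n\cdot C\,2^{-n(1+\alpha)}=C\sum_{n\ge 0}2^{-n(\alpha-\gamma\beta)}<\infty,
\]
using the elementary bound $\mathbb{E}(M_n^\beta)\le\sum_k\mathbb{E}|X((k{+}1)/2^n)-X(k/2^n)|^\beta$. (When $\beta>1$ one can either apply the bound directly, since only the $\beta$-th power of a maximum over $2^n$ terms is needed and each summand is controlled, or insert a Jensen/$\ell^\beta$ step; either way the geometric series in the exponent $\gamma\beta-\alpha<0$ converges.) The main potential obstacle is verifying the $L^\beta$ control of $K$ cleanly when $\beta<1$, but one can simply use $(\sum a_n)^\beta\le\sum a_n^\beta$ in that regime, so no real difficulty arises.
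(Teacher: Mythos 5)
Your proposal is a correct, self-contained proof by the classical dyadic chaining argument (Kolmogorov--Chentsov, essentially as in Revuz--Yor or Kunita); the paper itself does not prove Theorem~\ref{kcc} but simply cites \cite{K97}, so there is no internal proof to compare against. The Borel--Cantelli step, the chaining bound, the identification of $\widetilde X$ as a version via $L^\beta$-convergence along dyadics, and the key estimate $K\le C_\gamma'\sum_{n\ge 0}2^{\gamma n}M_n$ are all standard and correct; in particular the last bound is exactly what upgrades the almost-sure local H\"older estimate to the global quantitative statement $\E(K^\beta)<\infty$ required by the theorem. The only place that needs care is the displayed inequality $\E(K^\beta)\le C\sum_n 2^{\gamma\beta n}\E(M_n^\beta)$: as written it relies on $(\sum a_n)^\beta\le\sum a_n^\beta$, which is valid only for $\beta\le 1$. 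For $\beta>1$ you should instead apply Minkowski's inequality in $L^\beta$, giving $\|K\|_\beta\le C\sum_n 2^{\gamma n}\|M_n\|_\beta\le C\sum_n 2^{n(\gamma-\alpha/\beta)}<\infty$, or the weighted Jensen step you allude to (splitting off a summable factor $2^{-\epsilon n}$ with $\gamma\beta+(\beta-1)\epsilon<\alpha$); either repair produces the same convergent geometric series because $\gamma<\alpha/\beta$, so the conclusion stands. Since you flagged this point yourself, I regard it as a presentational fix rather than a gap.
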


\begin{thm}[Skorokhod embedding theorem \cite{HH80}]\label{ske}
Let $\{S_n=\sum_{i=1}^{n}X_{i},\mathcal{F}_{n}, n\ge 1\}$ be a zero-mean, square-integrable martingale. Then there exist a probability space supporting a (standard) Brownian motion $W$ and a sequence of nonnegative variables $\tau_{1}, \tau_{2},\ldots$ with the following properties: if $T_{n}=\sum_{i=1}^{n}\tau_{i}$, $S'_{n}=W(T_n)$, $X'_1=S'_1$, $X'_n=S'_{n}-S'_{n-1}$ for $n\ge 2$, and $\mathcal{B}_{n}$ is the $\sigma$-field generated by $S'_1,\ldots, S'_{n}$ and $W(t)$ for $0\le t\le T_n$, then
\begin{enumerate}
\item $\{S_{n}, n\ge 1\}=_{d} \{S'_n, n\ge 1\}$;
\item $T_n$ is a stopping time with respect to $\mathcal{B}_n$;
\item $\E(\tau_{n}|\mathcal{B}_{n-1})=\E(|X'_{n}|^{2}|\mathcal{B}_{n-1}) $ a.s.;
\item for any $p>1$, there exists a constant $C_p<\infty$ depending only on $p$ such that
\[
\E(\tau_{n}^{p}|\mathcal{B}_{n-1})\leq C_{p}\E(|X'_{n}|^{2p}|\mathcal{B}_{n-1})=C_{p}\E(|X'_{n}|^{2p}|X'_1,\ldots,X'_{n-1}) \quad \hbox{a.s.},
\]
where $C_p=2(8/\pi^2)^{p-1}\Gamma(p+1)$, with $\Gamma$ being the usual Gamma function.
\end{enumerate}
\end{thm}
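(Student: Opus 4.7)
The plan is to first establish a one-step Skorokhod embedding for a single centered square-integrable random variable, and then iterate this construction along the martingale differences using the strong Markov property. For the base case, I would treat a random variable $X$ with $\E X = 0$ and $\E X^2 < \infty$ whose law is $\mu$: decompose $\mu$ into its restrictions $\mu_-$ and $\mu_+$ to the negative and positive half-lines and draw an auxiliary pair $(U,V)$ with $U \le 0 \le V$ from the probability measure
\[
\nu(du, dv) = c(v-u)\,\mu_-(du)\,\mu_+(dv),
\]
independently of a standard Brownian motion $W$, where $c$ is a normalizing constant (with a harmless modification absorbing any atom at $0$). Letting $\tau$ be the exit time of $W$ from the random interval $[U,V]$, optional stopping applied to $W$ and to $W^2 - t$ gives $W(\tau) =_d X$, $\E[\tau \mid U,V] = |UV|$, and hence $\E\tau = \E X^2$. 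Property (4) follows by the classical estimate $\E[\tau_{[u,v]}^p] \le c_p(\max(|u|,|v|))^{2p}$ for Brownian exit times, integrated against $\nu$; the sharp constant $C_p = 2(8/\pi^2)^{p-1}\Gamma(p+1)$ comes from the spectral expansion of the Brownian exit density on a symmetric interval.

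For the iteration, suppose $S_1,\ldots,S_n$ have been realized as $W(T_1),\ldots,W(T_n)$ with $\mathcal{B}_n$-stopping times $T_i$. Conditionally on $\mathcal{B}_n$ the increment $X_{n+1}$ is mean-zero with a regular conditional law $\mu_\omega$. Using a measurable selection argument, I would pick a family $\nu_\omega$ of auxiliary laws as in the base case that depends measurably on $\omega$. The strong Markov property ensures that $\widetilde W(s) := W(T_n + s) - W(T_n)$ is a Brownian motion independent of $\mathcal{B}_{T_n}$; applying the base case to $\widetilde W$ and $\nu_\omega$ yields a stopping time $\tau_{n+1}$, and I set $T_{n+1} := T_n + \tau_{n+1}$ and $S'_{n+1} := W(T_{n+1})$. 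Properties (1) and (2) then follow by construction and induction, while (3) and (4) are the conditional versions of the base-case identities, with the sharp constant $C_p$ propagating through unchanged.

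The main obstacle will be the bookkeeping required to carry out the whole construction on a single probability space with the correct filtration. Concretely, I would enlarge the base space to include an independent i.i.d.\ sequence of uniform randomizers used to sample each auxiliary pair $(U_\omega, V_\omega)$ from $\nu_\omega$, and then verify by induction on $n$, using regular conditional probabilities, that the joint law of $(S'_1,\ldots,S'_n)$ matches that of $(S_1,\ldots,S_n)$. The most delicate single estimate is the sharp form of (4): one reduces by pathwise monotonicity to the symmetric interval $[-r,r]$ and identifies $C_p$ through the exact moments of the first exit time of $[-1,1]$, which are expressible in terms of $\Gamma(p+1)$ and the Dirichlet eigenvalues $\{(k+\tfrac12)^2\pi^2\}_{k \ge 0}$ of $-\tfrac12\partial_{xx}$ on $[-1,1]$.
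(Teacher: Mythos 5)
This statement is not proved in the paper at all: it is quoted verbatim in the appendix from Hall and Heyde [HH80], so there is no in-paper argument to compare against. Your outline is essentially the classical Hall--Heyde/Dubins--Sawyer randomized construction (auxiliary pair $(U,V)$ with law proportional to $(v-u)\,\mu_-(du)\,\mu_+(dv)$, exit times of $W$, iteration via the strong Markov property and regular conditional laws), and the base-case identity $W(\tau)=_d X$, $\E(\tau\mid U,V)=|UV|=\E(W(\tau)^2\mid U,V)$, as well as properties (1)--(3), are handled correctly in spirit; note only that at stage $n$ one must condition on $X_1,\dots,X_n$ (equivalently $S_1,\dots,S_n$), not on the original $\mathcal F_n$, which is also what makes the last equality in (4) valid.

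The genuine gap is in your treatment of property (4). You propose to bound $\E\bigl(\tau_{[u,v]}^p\bigr)\le c_p\bigl(\max(|u|,|v|)\bigr)^{2p}$ by comparison with the symmetric interval and then integrate against $\nu$. This cannot give the stated bound $C_p\,\E\bigl(|X'_n|^{2p}\mid\mathcal B_{n-1}\bigr)$, even with a worse constant. For an asymmetric interval $[-a,K]$ with $a\ll K$ one has $\max(|u|,|v|)^{2p}=K^{2p}$, while the exit position satisfies $\E|W(\tau)|^{2p}=\frac{K a^{2p}+a K^{2p}}{K+a}\approx aK^{2p-1}$, so your per-interval bound overshoots the target by the unbounded factor $K/a$. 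Concretely, for a centered two-point law $X\in\{K,-a\}$ with $\P(X=K)=q\to0$ (so $a=Kq/(1-q)$), the measure $\nu$ is a point mass at $(-a,K)$ and your route yields only $\E\tau^p\le c_pK^{2p}$, whereas $\E|X|^{2p}\asymp qK^{2p}$; the theorem nevertheless holds here because in fact $\E\tau^p\asymp aK^{2p-1}\asymp\E|X|^{2p}$. The missing ingredient is precisely Sawyer's per-interval inequality $\E\bigl(\tau_{[u,v]}^p\bigr)\le C_p\,\E\bigl(|W(\tau_{[u,v]})|^{2p}\bigr)$ for \emph{asymmetric} intervals, with $C_p=2(8/\pi^2)^{p-1}\Gamma(p+1)$; this is the real content behind the sharp constant and does not reduce to the exit-time moments of $[-1,1]$ by pathwise monotonicity, since enlarging the interval increases $\tau$ but completely changes the law of the exit position against which $\tau^p$ must be compared. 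With that lemma in place, conditioning on $(U,V)$ and on $\mathcal B_{n-1}$ gives (4) as you intend; without it, the step fails.
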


\begin{prop}\label{mpe}
Let $X_1, X_2,\ldots, X_n$ be real-valued random variables defined on a common probability space and $\|X_i\|_{p}<\infty$ for $1\le i\le n$, $p\ge 1$. Then
\[\Big\|\max_{1\le k\le n}| X_k|\Big\|_{p}\le n^{\frac{1}{p}}\max\{\|X_k\|_{p}: 1\le k\le n\}.\]
\end{prop}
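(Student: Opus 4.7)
The plan is to reduce the bound on the $L^p$ norm of the maximum to the elementary fact that the maximum of nonnegative numbers is bounded by their sum. Since $p \ge 1$ and the function $x \mapsto x^p$ is monotone on $[0,\infty)$, we have the pointwise identity
\[
\Big(\max_{1\le k\le n}|X_k|\Big)^p = \max_{1\le k\le n}|X_k|^p,
\]
and the latter is dominated by the sum $\sum_{k=1}^n |X_k|^p$.

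Next, I would take expectations on both sides and use linearity together with the integrability assumption $\|X_k\|_p < \infty$ to obtain
\[
\E\Big[\max_{1\le k\le n}|X_k|^p\Big] \le \sum_{k=1}^n \E\big[|X_k|^p\big] \le n\cdot\max_{1\le k\le n}\E\big[|X_k|^p\big].
\]
Taking $p$-th roots gives the desired bound
\[
\Big\|\max_{1\le k\le n}|X_k|\Big\|_p \le n^{1/p}\max_{1\le k\le n}\|X_k\|_p.
\]

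There is no real obstacle here; the statement is a standard crude $L^p$ maximal bound used repeatedly in the proof of Lemma~\ref{wmn} (in the estimate~\eqref{est}) to convert control of individual block moments into control of the maximum over $Q_n$ blocks at the cost of only a polylogarithmic factor once $\kappa p$ is taken large enough. Thus, beyond verifying the one-line pointwise inequality and applying monotonicity of the expectation and of $x\mapsto x^{1/p}$, no further ingredients are needed.
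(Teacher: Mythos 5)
Your proof is correct and is essentially the paper's own argument: both rest on the pointwise bound $\max_{1\le k\le n}|X_k|^p \le \sum_{k=1}^n |X_k|^p$ followed by taking expectations and $p$-th roots. Nothing further is needed.
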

\begin{proof}
We have $\max_{1\le k\le n}|X_k|^p\le \sum_{i=1}^{n}|X_i|^p$, and the proposition follows by taking expectation of both sides.
\end{proof}

\begin{thm}\cite{HH80}\label{bhi}
Let $X_1=S_1$, $X_i=S_{i}-S_{i-1}$ for $2\le i\le n$. If $\{S_i,\mathcal{F}_i,1\le i\le n\}$ is a martingale and $p>0$, then there exists a constant $C$ depending only on p such that
\[
\mathbb{E}\bigg(\max_{1\le i\le n}|S_i|^p\bigg)\le C\bigg\{\mathbb{E}\bigg[\bigg(\sum_{i=1}^{n}\mathbb{E}(X_{i}^2|\mathcal{F}_{i-1})\bigg)^{p/2}\bigg]+\mathbb{E}\bigg(\max_{1\le i\le n}|X_i|^p\bigg)\bigg\}.
\]
\end{thm}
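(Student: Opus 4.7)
The plan is to combine Doob's maximal inequality with two applications of the Burkholder--Davis--Gundy (BDG) inequality: first to $S_n$ itself, and then to the auxiliary martingale $D_n := [S]_n - \langle S\rangle_n$, where $[S]_n := \sum_{i=1}^n X_i^2$ and $\langle S\rangle_n := \sum_{i=1}^n \E(X_i^2|\mathcal F_{i-1})$. The role of the $\E(\max_{i\le n}|X_i|^p)$ term on the right-hand side is precisely to absorb the deviation of $[S]_n$ from its predictable compensator $\langle S\rangle_n$.

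Assume $p\ge 2$, which is the case actually used throughout the body of the paper. Doob's $L^p$ maximal inequality gives $\E(\max_{i\le n}|S_i|^p)\le (p/(p-1))^p\,\E(|S_n|^p)$, and the scalar BDG inequality yields $\E(|S_n|^p)\le C_p\,\E([S]_n^{p/2})$. Decomposing $[S]_n = \langle S\rangle_n + D_n$ and applying the elementary inequality $(a+b)^{p/2}\le 2^{p/2-1}(a^{p/2}+|b|^{p/2})$, the problem reduces to controlling $\E(|D_n|^{p/2})$. Since $D_n$ is a martingale with differences $Y_i := X_i^2 - \E(X_i^2|\mathcal F_{i-1})$ satisfying $Y_i^2\le 2X_i^4+2(\E(X_i^2|\mathcal F_{i-1}))^2$, a second application of BDG combined with the crude factorizations $\sum_i X_i^4 \le (\max_{i\le n} X_i^2)\,[S]_n$ and $\sum_i (\E(X_i^2|\mathcal F_{i-1}))^2 \le \langle S\rangle_n^{\,2}$ yields
\[
\E(|D_n|^{p/2}) \le C_p\,\E\big[(\max_{i\le n} X_i^2)^{p/4}\,[S]_n^{p/4}\big] + C_p\,\E(\langle S\rangle_n^{p/2}).
\]
Cauchy--Schwarz on the first term followed by Young's inequality $ab\le \varepsilon a^2+\varepsilon^{-1}b^2$ bounds it by $\varepsilon\,\E([S]_n^{p/2}) + C_\varepsilon\,\E(\max_{i\le n}|X_i|^p)$; choosing $\varepsilon$ small enough to absorb the first term back into the left-hand side completes the case $p\ge 2$.

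For $0<p<2$ the preceding strategy breaks down because Doob's $L^p$ maximal inequality fails and the convexity inequality reverses, so I would fall back to Davis' decomposition $X_i = X_i' + X_i''$ with $X_i' := X_i\,1_{\{|X_i|\le 2X^*_{i-1}\}}$ (where $X^*_k := \max_{j\le k}|X_j|$) together with its martingale compensation, combined with Burkholder's good-$\lambda$ inequality of the form $\P(\max_{i\le n}|S_i|>2\lambda,\ \langle S\rangle_n^{1/2}+\max_{i\le n}|X_i|\le\delta\lambda)\le \delta^2\,\P(\max_{i\le n}|S_i|>\lambda)$. Integrating this against $p\lambda^{p-1}\,d\lambda$ and choosing $\delta$ small enough yields the inequality in this remaining range. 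The main obstacle I expect is the absorption step for $p\ge 2$: the recursion $\E([S]_n^{p/2})\le A+\varepsilon\,\E([S]_n^{p/2})+B$ is only useful when $\E([S]_n^{p/2})<\infty$ is known a priori, so one must first truncate the $X_i$ to make everything bounded, prove the inequality under truncation with constants depending only on $p$, and then pass to the limit via Fatou's lemma; carefully tracking the constants through both BDG applications is the other delicate bookkeeping step.
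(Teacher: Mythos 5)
The paper does not prove this statement at all: it is quoted verbatim from Hall and Heyde (their Burkholder/Rosenthal-type inequality with the conditional square function), whose proof in the cited source runs through Burkholder's good-$\lambda$ distribution function inequalities. Your argument for $p\ge 2$ (Doob's maximal inequality, Burkholder--Davis--Gundy for $S_n$, the decomposition $[S]_n=\langle S\rangle_n+D_n$, a second BDG application to the martingale $D_n$ with differences $X_i^2-\E(X_i^2|\mathcal F_{i-1})$, then Cauchy--Schwarz/Young and absorption) is a correct and standard alternative proof in that range, and $p\ge2$ is the only range the paper ever uses; note that $p/2\ge1$ is exactly what makes the second BDG application legitimate, so this route genuinely cannot reach $p<2$, where you rightly fall back on the good-$\lambda$ inequality --- which is essentially the original Hall--Heyde/Burkholder argument, so that part of your proposal is a sketch resting on the same nontrivial input as the citation (and the constant there is of the form $\delta^2/(1-\delta)^2$ rather than exactly $\delta^2$, though this is immaterial for the integration step). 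Two small corrections to the $p\ge2$ part: the a priori finiteness needed for the absorption is immediate, since $[S]_n^{p/2}\le n^{p/2}\max_{i\le n}|X_i|^p$, so either $\E(\max_i|X_i|^p)=\infty$ and the inequality is trivial, or $\E([S]_n^{p/2})<\infty$ (the $n$-dependent bound is harmless because it is used only qualitatively); and your proposed fix of truncating the $X_i$ would, as stated, destroy the martingale property (one would have to recenter by $\E(X_i1_{\{|X_i|\le K\}}|\mathcal F_{i-1})$ and then compare the compensators), so it is fortunate that the truncation detour is unnecessary.
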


\section*{Acknowledgements}
The authors are deeply grateful to the referees for their
great patience and very careful reading of the paper and for many valuable suggestions which lead to significant improvements of the paper. This work is supported by National Key R$\&$D Program of China (No. 2023YFA1009200), NSFC (Grants 11871132, 11925102), Liaoning Revitalization Talents Program (Grant XLYC2202042), and Dalian High-level Talent Innovation Program (Grant 2020RD09).


\end{document}